\newtheorem{thm}{Theorem}[section]
\newtheorem{prop}[thm]{Proposition}
\newtheorem{cor}[thm]{Corollary}
\newtheorem{question}[thm]{Question}
\newtheorem*{chickenthm}{The Chicken McNugget Problem}
\newtheorem*{nummonthm}{The Numerical Monoid Problem}
\theoremstyle{definition}
\newtheorem{definition}[thm]{Definition}
\newtheorem{example}[thm]{Example}
\newcommand{\nm}{\langle n_1, \ldots, n_k\rangle}
\newcommand{\cmm}{\ensuremath{%
	\mathchoice
		{\includegraphics[height=1.7ex]{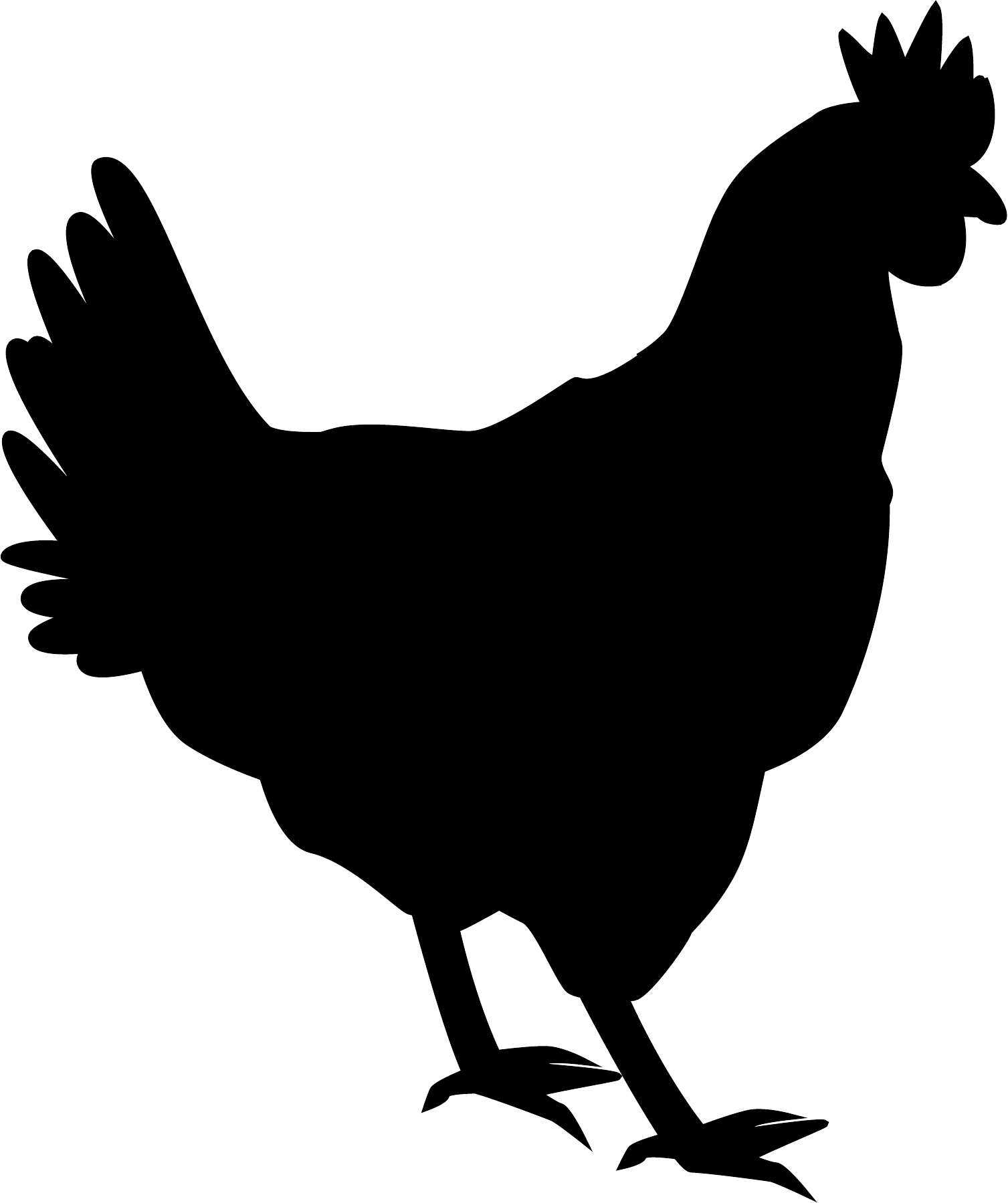}}
		{\includegraphics[height=1.7ex]{hen-311285.pdf}}
		{\includegraphics[height=1.5ex]{hen-311285.pdf}}
		{\includegraphics[height=1.0ex]{hen-311285.pdf}}
}}
\newcommand{\frob}{F}
\begin{document}
\begin{center}
\Large
% TITLE GOES HERE
Factoring in the Chicken McNugget monoid\footnote{This article is based on a 2013 PURE Mathematics REU Project by Emelie Curl, Staci Gleen, and Katrina Quinata which was directed by the authors and Roberto Pelayo.}
\end{center}
%\bigskip
\medskip

%  Authors names supressed for double-blind

\begin{flushright}
Scott T. Chapman \\
Sam Houston State University \\
Department of Mathematics and Statistics\\
Box 2206\\
Huntsville, TX  77341-2206\\
\verb+scott.chapman@shsu.edu+
\end{flushright}
\bigskip

\begin{flushright}
Christopher O'Neill \\
University of California at Davis \\
Department of Mathematics\\
One Shield Ave.\\
Davis, CA  95616\\
\verb+coneill@math.ucdavis.edu+
\end{flushright}
\bigskip

\begin{center}
\textit{People just want more of it.} - Ray Kroc \cite{BQ}
\end{center}

\medskip

\section{Introduction}
\label{sec:intro}

Every day, 34 million Chicken McNuggets are sold worldwide \cite{AN}.  
At most McDonalds locations in the United States today, Chicken McNuggets are sold in packs of 4, 6, 10, 20, 40, and 50 pieces.  However, shortly after their introduction in 1979 they were sold in packs of 6, 9, and 20.  The following problem spawned from the use of these latter three numbers.

\begin{chickenthm}
What numbers of Chicken McNuggets can be ordered using only packs with 6, 9, or 20 pieces?
\end{chickenthm}

% \begin{wrapfigure}{l}{0\linewidth}
% \includegraphics[width=2in]{}
% \caption{The 6 piece box}
% \end{wrapfigure}

Early references to this problem can be found in \cite{PW,V}.  Positive integers satisfying the Chicken McNugget Problem are now known as \textit{McNugget numbers}~\cite{McN}.  In particular, if $n$ is a McNugget number, then there is an ordered triple $(a,b,c)$ of nonnegative integers such that
\begin{equation}\label{basiceq}
6a + 9b + 20c = n.
\end{equation}
We will call $(a,b,c)$ a \textit{McNugget expansion} of $n$ (again see \cite{McN}).  Since both $(3,0,0)$ and $(0,2,0)$ are McNugget expansions of 18, it is clear that McNugget expansions are not unique.  This phenomenon will be the central focus of the remainder of this article.

If $\max\{a,b,c\} \geq 8$ in \eqref{basiceq}, then $n \geq 48$ and hence determining the numbers $x$ with $0 \leq x \leq 48$ that are McNugget numbers can be checked either by hand or your favorite computer algebra system.  The only such $x$'s that are not McNugget numbers are: 1, 2, 3, 4, 5, 7, 8, 10, 11, 13, 14, 16, 17, 19, 22, 23, 25, 28, 31, 34, 37, and 43.  (The non-McNugget numbers are sequence A065003 in the On-Line Encyclopedia of Integer Sequences \cite{OEIS}.)  We~demonstrate this in Table~\ref{tb:mcnuggetexpansions} with a chart that offers the McNugget expansions (when they exist) of all numbers $\leq 50$.

\begin{table}[t]\label{tb:mcnuggetexpansions}
\small{
\[
\begin{array}{||c|c||c|c||c|c||}
\hline
\# & (a,b,c) & \# & (a,b,c) & \# & (a,b,c) \\
\hline
\mathbf{0} & (0,0,0) & 
17 & \mbox{NONE} & 
34 & \mbox{NONE} \\
\hline
1 & \mbox{NONE} & 
\mathbf{18} & (3,0,0) \, (0,2,0) & 
\mathbf{35} & (1,1,1) \\
\hline
2 & \mbox{NONE} & 
19 & \mbox{NONE} & 
\mathbf{36} & (0,4,0) \, (3,2,0) \, (6,0,0) \\
\hline 
3 & \mbox{NONE} & 
\mathbf{20} & (0,0,1) & 
37 & \mbox{NONE} \\
\hline 
4 & \mbox{NONE} & 
\mathbf{21} & (2,1,0) & 
\mathbf{38} & (0,2,1) \, (3,0,1) \\
\hline 
5 & \mbox{NONE} & 
22 & \mbox{NONE} & 
\mathbf{39} & (2,3,0) \, (5,1,0) \\
\hline
\mathbf{6} & (1,0,0) & 
23 & \mbox{NONE} & 
\mathbf{40} & (0,0,2) \\
\hline 
7 & \mbox{NONE} & 
\mathbf{24} & (4,0,0) \, (1,2,0) & 
\mathbf{41} & (2,1,1) \\
\hline
8 & \mbox{NONE} & 
25 & \mbox{NONE} & 
\mathbf{42} & (1,4,0) \, (4,2,0) \, (7,0,0) \\
\hline
\mathbf{9} & (0,1,0) & 
\mathbf{26} & (1,0,1) & 
43 & \mbox{NONE}\\
\hline
10 & \mbox{NONE} & 
\mathbf{27} & (0,3,0) \, (3,1,0) & 
\mathbf{44} & (1,2,1) \, (4,0,1) \\
\hline
11 & \mbox{NONE} & 
28 & \mbox{NONE} & 
\mathbf{45} & (0,5,0) \, (3,3,0) \, (6,1,0) \\
\hline
\mathbf{12} & (2,0,0) & 
\mathbf{29} & (0,1,1) & 
\mathbf{46} & (1,0,2) \\
\hline
13 & \mbox{NONE} & 
\mathbf{30} & (5,0,0) \, (2,2,0) & 
\mathbf{47} & (0,3,1) \, (3,1,1) \\
\hline
14 & \mbox{NONE} & 
31 & \mbox{NONE} & 
\mathbf{48} & (2,4,0) \, (5,2,0) \, (8,0,0) \\
\hline
\mathbf{15} & (1,1,0) & 
\mathbf{32} & (2,0,1) & 
\mathbf{49} & (0,1,2) \\
\hline
16 & \mbox{NONE} & 
\mathbf{33} & (1,3,0) \, (4,1,0) & 
\mathbf{50} & (2,2,1) \, (5,0,1) \\
\hline
\end{array}
\]}
\caption{The McNugget numbers and their expansions from 0 to 50.
% (McNugget numbers are listed in boldface)
}
\end{table}

What happens with larger values?  Table~\ref{tb:mcnuggetexpansions} has already verified that 44, 45, 46, 47, 48, and 49 are McNugget numbers.  Hence, we have a sequence of 6 consecutive McNugget numbers,
and by repeatedly adding 6 to these values, we obtain the following.

\begin{prop}
Any $x > 43$ is a McNugget number.
\end{prop}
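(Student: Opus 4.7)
The plan is to leverage the six consecutive McNugget numbers $44, 45, 46, 47, 48, 49$ already exhibited in Table~\ref{tb:mcnuggetexpansions}, together with the trivial observation that adding a pack of $6$ preserves the McNugget property. More precisely, if $n$ has McNugget expansion $(a,b,c)$, then $n+6$ has McNugget expansion $(a+1,b,c)$, so by induction $n + 6q$ is a McNugget number for every integer $q \geq 0$.

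Given this, I would fix any $x > 43$ and write $x - 44 = 6q + r$ via the division algorithm, where $q \geq 0$ and $r \in \{0,1,2,3,4,5\}$. Rearranging gives $x = (44 + r) + 6q$, and since $44 + r$ ranges over $\{44,45,46,47,48,49\}$, the table supplies an explicit McNugget expansion for the base term. Adding $q$ packs of six then produces a McNugget expansion of $x$, completing the argument.

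There is no genuine obstacle here; the only content of the proof is the observation that one needs a run of six consecutive McNugget numbers (matching the smallest pack size), and Table~\ref{tb:mcnuggetexpansions} verifies that such a run begins at $44$. If I wanted to streamline the write-up, I could simply say: starting from the six base expansions $(a,b,c)$ listed for $44,\ldots,49$ and incrementing $a$ repeatedly, we obtain expansions for every integer $\geq 44$.
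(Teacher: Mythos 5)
Your proof is correct and is essentially the same argument the paper gives: the authors also note that $44$ through $49$ form six consecutive McNugget numbers and that repeatedly adding $6$ covers every larger integer. Your write-up just makes the division-algorithm bookkeeping explicit.
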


\noindent
Thus, 43 is the largest number of McNuggets that cannot be ordered with packs of 6, 9, and 20.  

Our aim in this paper is to consider issues related to the multiple occurances of McNugget expansions as seen in Table~\ref{tb:mcnuggetexpansions}.  Such investigations fall under the more general purview of the theory of non-unique factorizations in integral domains and monoids (a good technical reference on this subject is~\cite{GHKb}).  Using a general context, we show that the McNugget numbers form an additive monoid and discuss some properites shared by the class of additive submonoids of the nonnegative integers.  We then define several combinatorial characteristics arising in non-unique factorization theory, and compute their explicit values for the McNugget Monoid.  

\begin{wrapfigure}{l}{0\linewidth}
\includegraphics[width=2in]{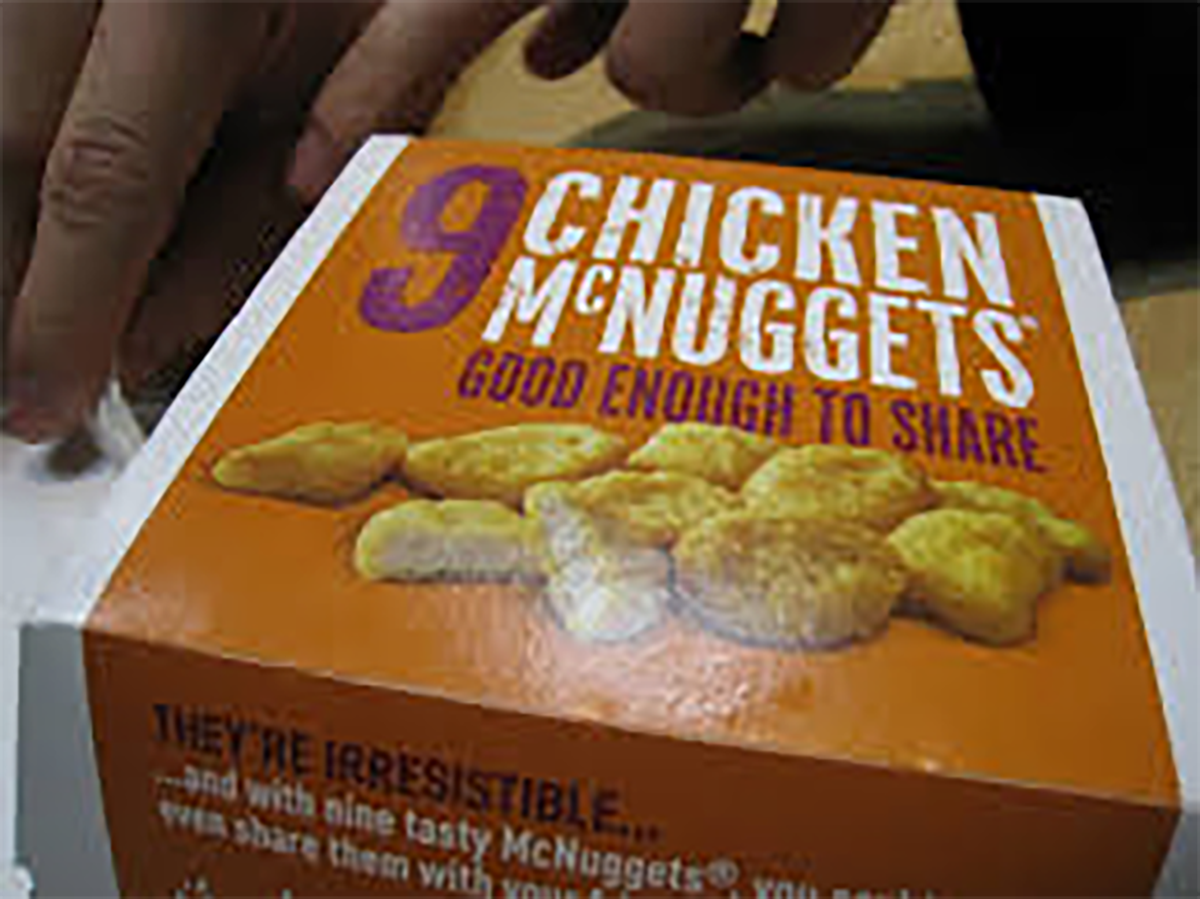}
\caption{The 9 piece box.}
\end{wrapfigure}

By emphasizing results concerning McNugget numbers, we offer the reader a glimpse into the vast literature surrounding non-unque factorizations.  While we stick to the calculation of basic factorization invariants, our results indicate that such computations involve a fair amount of complexity.  Many of the results we touch on have appeared in papers authored or co-authored by undergraduates
in National Science Foundation Sponsored REU Programs.  This is an area that remains rich in open problems, and we hope our discussion here spurs our readers (both young and old) to explore this rewarding subject more deeply.

\section{A brief diversion into generality}
\label{sec:generality}

As illustrated above, Chicken McNugget numbers fit into a long studied mathematical concept.  Whether called the Postage Stamp Problem \cite{PSP}, the Coin Problem \cite{CP}, or the Knapsack Problem \cite{KP}, the idea is as follows.  Given a set of $k$ objects with predetermined values $n_1, n_2, \ldots , n_k$, what possible values of $n$ can be had from combinations of these objects?  Thus, if a value of $n$ can be obtained, then there is an ordered $k$-tuple of nonnegative integers $(x_1, \ldots, x_k)$ that satisfies the linear diophatine equation
\begin{equation}\label{anothereq}
n = x_1n_1 + x_2n_2 + \cdots + x_kn_k.
\end{equation}
We view this in a more algebraic manner.  Given integers $n_1, \ldots, n_k > 0$, set
\[
\langle n_1, \ldots, n_k \rangle = \{x_1n_1 + \cdots + x_kn_k \mid x_1, \ldots, x_k \in \mathbb N_0\}.
\]
Notice that if $s_1$ and $s_2$ are in $\nm$, then $s_1 + s_2$ is also in $\nm$.  
% (where $+$ represents regular addition).  
Since $0 \in \nm$ and $+$ is an associative operation, the set $\nm$ under $+$ forms a \textit{monoid}.  
% (Note that a monoid is almost a group - it merely lacks the inverse element axiom.)  
Monoids of nonnegative integers under addition, like the one above, are known as \textit{numerical monoids}, and $n_1,\ldots ,n_k$ are called \textit{generators}.  We will call the numerical monoid $\langle 6, 9, 20 \rangle$ the \textit{Chicken McNugget monoid}, and denote it by $\cmm$.  

\begin{wrapfigure}{l}{0\linewidth}
\includegraphics[width=2in]{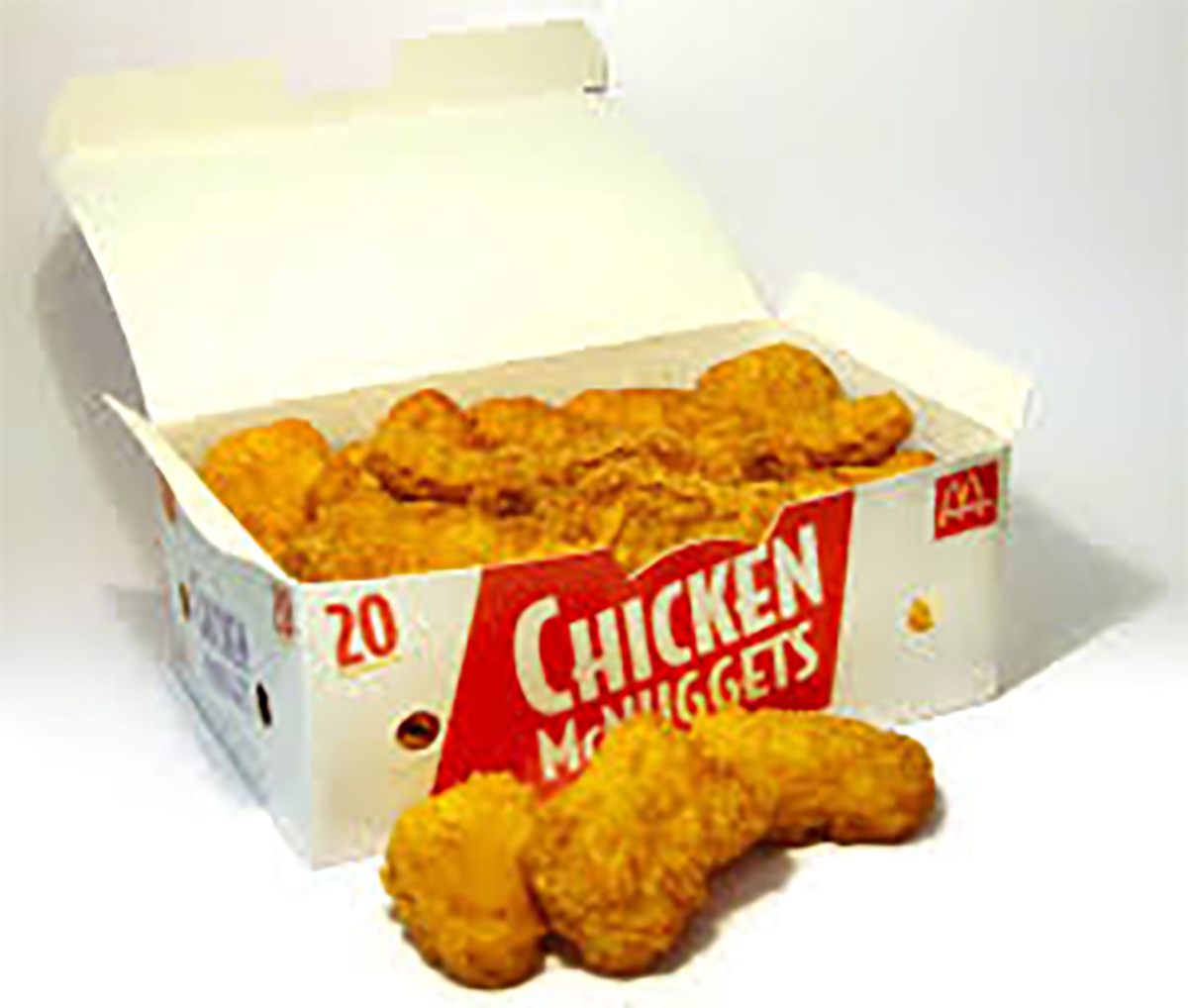}
\caption{The 20 piece box}
\end{wrapfigure}

Since $\cmm$ consists of the same elements as those in $\langle 6, 9, 20, 27\rangle$, it is clear that generating sets are not unique.  Using elementary number theory, it is easy to argue that any numerical monoid $\nm$ does have a unique generating set with minimal cardinality obtained by eliminating those generators $n_i$ that lie in the numerical monoid generated by $\{n_1, \ldots, n_k\} - \{n_i\}$.  In this way, it is clear that $\{6, 9, 20\}$ is indeed the minimal generating set of~$\cmm$.  When dealing with a general numerical monoid $\nm$, we will assume without loss of generality that the given generating set $\{n_1, \ldots, n_k\}$ is minimal.

In view of this broader setting, the Chicken McNugget Problem can be generalized as follows.

\begin{nummonthm}
If $n_1, \ldots, n_k$ are positive integers, then which nonnegative integers lie in $\nm$?
\end{nummonthm}

\begin{example}\label{ex1}
We have already determined above exactly which nonnegative integers are McNugget numbers.  Suppose the Post Office issues stamps in denominations of 4 cents, 7 cents, and 10 cents.  What values of postage
can be placed on a letter (assuming that as many stamps as necessary can be placed on the envelope)?  In particular, we are looking for the elements of $\langle 4, 7, 10 \rangle$.  We can again use brute force to find all the solutions to
\[
4a + 7b + 10c = n
\]
and conclude that 1, 2, 3, 5, 6, 9, and 13 cannot be obtained.  Since 14, 15, 16, and 17 can, all postage values larger than 13 are possible. \hfill $\Box$
\end{example}

% \begin{wrapfigure}{l}{0\linewidth}
% \includegraphics[width=2.5in]{43.png}
% \caption{The Frobenius number: a graphical interpretation.}
% \end{wrapfigure}

Let's return to the largest number of McNuggets that can't be ordered (namely, 43) and the companion number 13 obtained in Example~\ref{ex1}.  The existence of these numbers is no accident.  To see this in general, let $n_1,\ldots, n_k$ be a set of positive integers that are relatively prime.  By elementary number theory, there is a set $y_1, \ldots, y_k$ of (possibly negative) integers such that 
\[
1 = y_1n_1 + \cdots + y_kn_k.
\]
By choosing an element $V = x_1n_1 + \cdots + x_kn_k \in \nm$ with sufficiently large coefficients (for instance, if each $x_i \ge n_1|y_i|$), we see $V + 1, \ldots, V + n_1$ all lie in $\nm$ as well.  As such, any integer greater than $V$ can be obtained in $\nm$ by adding copies of $n_1$.  

This motivates the following definition.  

\begin{definition}
If $n_1, \ldots, n_k$ are relatively prime positive integers, then the \textit{Frobenius number} of $\nm$, denoted $\frob(\nm)$, is the largest positive integer $n$ such that $n \notin \nm$.
\end{definition}

% \setcounter{footnote}{3}

% \begin{wrapfigure}{l}{0\linewidth}
% \includegraphics[width=2.5in]{shapes.png}
% \caption{McNuggets come in 4 shapes\protect\footnotemark}
% \end{wrapfigure}

We have already shown that $\frob(\cmm) = 43$ and $\frob(\langle 4, 7, 10 \rangle) = 13$.  A famous result of Sylvester from 1884 \cite{Syl} states that if $a$ and $b$ are relatively prime, then $\frob(\langle a, b \rangle) = ab - a - b$ (a nice proof of this can be found in \cite{Be}).  This is where the fun begins, as strictly speaking no closed formula exists for the Frobenius number of numerical monoids that require 3 or more generators.  While there are fast algorithms that can compute $\frob(\langle n_1, n_2, n_3 \rangle)$ (see for instance \cite{ELSW}), at best formulas for $\frob(\nm)$ exist only in special cases (you can find one such special case where $\frob(\cmm) = 43$ pops out in \cite[p.~14]{AC}).  Our purpose is not to compile or expand upon the vast literature behind the Frobenius number; in fact, we direct the reader to the excellent monograph of Ram\'irez Alfons\'in \cite{RA} for more background reading on the Diophatine Frobenius Problem.

% \footnotetext{\url{http://businessinsider.com/mcdonalds-chicken-nuggets-come-in-four-shapes-2016-12}}

\section{The McNugget factorization toolkit}
\label{sec:toolkit}

We focus now on the multiple McNugget expansions we saw in Table~\ref{tb:mcnuggetexpansions}. In particular, notice that there are McNugget numbers that have unique triples associated to them (6, 9, 12, 15, 20, 21, 26, 29, 32, 35, 40, 41, 46, and 49), some that have two (18, 24, 27, 30, 33, 35, 39, 44, 47, and 50), and even some that have three (36, 42, 45, and 48).  While the ``normal'' notion of factoring occurs in systems where multiplication prevails, notice that the ordered triples representing McNugget numbers are actually \textit{factorizations} of these numbers into ``additive'' factors of 6, 9, and 20.

Let's borrow some terminology from abstract algebra (\cite{Ga} is a good beginning reference on the topic).  Let $x$ and $y\in \nm$.  We say that $x$ \textit{divides} $y$ in $\nm$ if there exists a $z\in \nm$ such that $y=x+z$.  We call a nonzero element $x \in \nm$ \textit{irreducible} if whenever $x = y + z$, either $y = 0$ or $z = 0$.  (Hence, $x$ is irreducible if its only proper divisors are 0 and itself).  Both of these definitions are obtained from the usual ``multiplicative'' definition by replacing ``$\cdot$'' with ``$+$'' and 1 with 0.  

We leave the proof of the following to the reader.

\begin{prop}\label{p:generalatoms}
If $\nm$ is a numerical monoid, then its irreducible elements are precisely $n_1,\ldots ,n_k$. 
\end{prop}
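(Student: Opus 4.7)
The plan is to verify both containments of the set equality: first show that every generator $n_i$ is irreducible, then show that every irreducible element of $\nm$ must equal one of the $n_i$.

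For the easier direction, I would start with an arbitrary element $x \in \nm$ and, using the definition of the monoid, write $x = c_1 n_1 + \cdots + c_k n_k$ with $c_i \in \mathbb N_0$. If $x \neq 0$, then the total $c_1 + \cdots + c_k$ is at least $1$. The key observation is that if this total is at least $2$, then I can pick some index $j$ with $c_j \geq 1$ and split $x = n_j + y$ where $y = (c_j - 1)n_j + \sum_{i \neq j} c_i n_i \in \nm$, and moreover $y \neq 0$ because the remaining coefficients still sum to at least $1$. This is a nontrivial additive factorization, contradicting irreducibility. Hence any irreducible $x$ must have $c_1 + \cdots + c_k = 1$, which forces $x = n_j$ for some $j$.

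For the other direction, I want to show each $n_i$ is irreducible, and this is where the minimality hypothesis on the generating set enters. Suppose instead that $n_i = y + z$ with $y, z \in \nm$ both nonzero. Writing $y = \sum a_j n_j$ and $z = \sum b_j n_j$ gives an expansion $n_i = \sum (a_j + b_j) n_j$ in which the coefficient sum is at least $2$. Since every term is nonnegative and the left side equals $n_i$, the coefficient of $n_i$ itself on the right must be $0$ or $1$; in either case the equation exhibits $n_i$ as a nonnegative combination of the other $n_j$'s (either directly, or after subtracting $n_i$ from both sides), contradicting the assumption that $n_i \notin \langle n_1, \ldots, \widehat{n_i}, \ldots, n_k\rangle$.

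The main obstacle, and the only step that needs genuine care, is the second direction: one has to translate the minimality hypothesis (which was stated in the prose before the proposition as ``$n_i$ does not lie in the monoid generated by the other $n_j$'s'') into a statement that rules out the offending expansion of $n_i$. The case split on whether the coefficient of $n_i$ in the expansion is $0$ or $\geq 1$ is what handles this cleanly. Everything else is a direct unpacking of the definitions of ``divides'' and ``irreducible'' given just before the proposition, together with the fact that in $\mathbb N_0$ under addition, $a + b = 0$ forces $a = b = 0$.
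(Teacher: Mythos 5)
The paper itself gives no proof of this proposition --- it is explicitly ``left to the reader'' --- so there is nothing to compare against; your argument is the standard one, and its overall structure (both containments, with minimality of the generating set invoked exactly where it is needed) is correct. The first direction is fine as written: an irreducible element must have coefficient sum $1$ in any expansion, hence equals some $n_j$.

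There is, however, one misstated step in the second direction. Writing $n_i = \sum_j (a_j+b_j)n_j$ with total coefficient sum at least $2$, you claim that whether the coefficient $c_i = a_i + b_i$ of $n_i$ is $0$ or $1$, ``the equation exhibits $n_i$ as a nonnegative combination of the other $n_j$'s.'' That is true only when $c_i = 0$. When $c_i = 1$, subtracting $n_i$ from both sides yields $0 = \sum_{j\neq i} c_j n_j$, which exhibits $0$, not $n_i$; since all $n_j > 0$ this forces $c_j = 0$ for every $j \neq i$, so the total coefficient sum is $1$, contradicting the assumption that $y$ and $z$ are both nonzero (not the minimality of the generating set). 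You should also dispose of $c_i \ge 2$ explicitly, though that is immediate since the right-hand side would then exceed $n_i$. So the case split is the right idea, but the contradiction reached in the $c_i = 1$ case is a different one from the one you name; with that one-line correction the proof is complete.
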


Related to irreducibility is the notion of prime elements.  A nonzero element $x \in \nm$ is \textit{prime} if whenever $x$ divides a sum $y+z$, then either $x$ divides $y$ or $x$ divides $z$ (this definition is again borrowed from the multiplicative setting).  It is easy to check from the definitions that prime elements are always irreducible, but it turns out that in general irreducible elements need not be prime.  In fact, the irreducible elements $n_1, \ldots, n_k$ of~a numerical monoid are never prime.   To see this, let $n_i$ be an irreducible element and let $T$ be the numerical monoid generated by $\{n_1, \ldots, n_k\} - \{n_i\}$.  Although $n_i \notin T$, some multiple of $n_i$ must lie in~$T$ (take, for instance, $n_2n_i$).  Let $kn = \sum_{j\neq i}x_jn_j$ (for some $k > 1$) be the smallest multiple of $n_i$ in~$T$.  Then $n$ divides $\sum_{j \neq i} x_jn_j$ over $\nm$, but by the minimality of $k$, $n$~does not divide any proper subsum.  Thus $n_i$ is not prime.

For our purposes, we restate Proposition~\ref{p:generalatoms} in terms of $\cmm$.

\begin{cor}\label{c:mcnuggetatoms}
The irreducible elements of the McNugget monoid are 6, 9, and 20.  There are no prime elements.
\end{cor}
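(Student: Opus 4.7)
The plan is to derive both parts of the corollary directly from material already in place. For the first assertion I would simply cite Proposition~\ref{p:generalatoms} applied to the generating tuple $(n_1,n_2,n_3)=(6,9,20)$. The paper already established in Section~\ref{sec:generality} that $\{6,9,20\}$ is the minimal generating set of $\cmm$, so this step is a one-line invocation.

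For the second assertion I see two routes. The fastest is to quote the general argument in the paragraph preceding the corollary, which shows that no generator of any numerical monoid $\nm$ is prime; since the irreducibles of $\cmm$ are exactly its generators, this suffices. To keep the proof self-contained, however, I would rather exhibit an explicit non-primality witness for each of $6$, $9$, and $20$. For $6$, I would take $18 = 9+9$: here $6 \mid 18$ in $\cmm$ since $12 \in \cmm$, while $6 \nmid 9$ since $9-6 = 3 \notin \cmm$. For $9$, the decomposition $18 = 6+12$ works: $9 \mid 18$ in $\cmm$, but $9$ divides neither summand. For $20$, I would use $60 = 30+30$: we have $20 \mid 60$ in $\cmm$ because $40 \in \cmm$, yet $20 \nmid 30$ because $30-20 = 10 \notin \cmm$.

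No real obstacle is expected, since both halves are corollaries of results already in hand. The only mild subtlety is the witness for $20$: unlike $6$ and $9$ it shares no nontrivial common factor with the other generators, so I have to locate a multiple of $20$ that admits a genuinely different decomposition in $\cmm$ with neither summand a multiple of $20$. Because $\gcd(6,9,20)=1$, any sufficiently large multiple of $20$ will work, and the small choice $60 = 30+30$ is a convenient explicit instance.
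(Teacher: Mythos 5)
Your proposal is correct and matches the paper's treatment: the corollary is obtained exactly as you describe, by specializing Proposition~\ref{p:generalatoms} to the generators $6,9,20$ and invoking the general non-primality argument for generators given in the paragraph immediately preceding the corollary. Your optional explicit witnesses ($18=9+9$ for $6$, $18=6+12$ for $9$, and $60=30+30$ for $20$) all check out and simply concretize that same argument, so there is nothing further to fix.
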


\subsection{The set of factorizations of an element}
\label{sec:setoffactorizations}

We refer once again to the elements in Table~\ref{tb:mcnuggetexpansions} with multiple irreducible factorizations.  For each $x \in \cmm$, let
\[
\mathsf Z(x)=\{(a,b,c)\,|\, 6a+9b+20c = x\}.
\]
We will refer to $\mathsf Z(x)$ as the \textit{complete set of factorizations} $x$ in $\cmm$, and as such, we could relabel columns 2, 4, and 6 of Table~\ref{tb:mcnuggetexpansions} as ``$\mathsf Z(x)$.''  While we will not dwell on general structure problems involving $\mathsf Z(x)$, we do briefly address one in the next example.

\begin{example}\label{chickenunique}
What elements $x$ in the McNugget monoid are uniquely factorable (i.e., $|\mathsf Z(x)| = 1$)?  A quick glance at Table~\ref{tb:mcnuggetexpansions} yields 14 such nonzero elements (namely, 6, 9, 12, 15, 20, 21, 26, 29, 32, 35, 40, 41, 46, 49).  Are~there others?  We begin by noting in Table~\ref{tb:mcnuggetexpansions} that 
\[
(3,0,0), (0,2,0) \in \mathsf Z(18) \qquad \mbox{and} \qquad (10,0,0),(0,0,3) \in \mathsf Z(60).
\]
This implies that in any factorization in $\cmm$, 3~copies of~6 can be freely replaced with 2~copies of~9 (this is called a \textit{trade}).  Similarly, 2~copies of~9 can be traded for 3~copies of~6, and 3~copies of~20 can be traded for 10~copies of~6.  In particular, for $n = 6a + 9b + 20c \in \cmm$, if either $a \geq 3$, $b \geq 2$ or $c \geq 3$, then $n$ has more than one factorization in $\cmm$.  As such, if $n$ is to have unique factorization, then $0 \leq a \leq 2$, $0 \leq b \leq 1$, and $0 \leq c \leq 2$.  This leaves 18 possibilities, and a quick check yields that the 3 missing elements are $52 = (2,0,2)$, $55 = (1,1,2)$ and $61 = (2,1,2)$. \hfill $\Box$.  

The argument in Example \ref{chickenunique} easily generalizes -- every numerical monoid that requires more than one generator has finitely many elements that factor uniquely -- but note that minimal trades need not be as simple as replacing a multiple of one generator with a multiple of another.  Indeed, in the numerical monoid $\langle 5, 7, 9, 11 \rangle$, there is a trade $(1,0,0,1), (0,1,1,0) \in \mathsf Z(16)$, though 16 is not a multiple of any generator.  Determining the ``minimal'' trades of a numerical monoid, even computationally, is known to be a very hard problem in general \cite{Stu}.  
\end{example}

\subsection{The length set of an element and related invariants}
\label{sec:lengthset}

Extracting information from the factorizations of numerical monoid elements (or even simply writing them all down) can be a tall order.  To this end, combinatorially-flavored \emph{factorization invariants} are often used, assigning to each element (or to the monoid as a whole) a value measuring its failure to admit unique factorization.  We devote the remainder of this paper to examining several factorization invariants, and what they tell us about the McNugget monoid as compared to more general numerical monoids.  

We begin by considering a set, derived from the set of factorizations, that has been the focus of many papers in the mathematical literature over
the past 30 years.  If $x \in \cmm$ and $(a,b,c) \in \mathsf Z(x)$, then the \textit{length} of the factorization $(a,b,c)$ is denoted by
\[
|(a,b,c)| = a + b + c.
\]
We have shown earlier that factorizations in $\cmm$ may not be unique, and a quick look at Table~\ref{tb:mcnuggetexpansions} shows that their lengths can also differ.  For instance, 42 has three different factorizations, with lengths 5, 6 and 7, respectively.  Thus, we denote the \textit{set of lengths} of $x$ in $\cmm$ by
\[
\mathcal L(x) = \{|(a,b,c)| : (a,b,c) \in \mathsf Z(x)\}.
\]
In particular, $\mathcal L(42) = \{5,6,7\}$.   Moreover, set 
\[
\ell(x) = \min\mathcal L(x) \qquad \mbox{and} \qquad L(x) = \max\mathcal L(x).
\]
(In our setting, it is easy to argue that $\mathcal L(x)$ must be finite, so the maximum and minimum above are both well defined.)  To give the reader a feel for these invariants, in Table 2 we list all the McNugget numbers from 1 to 50 and their associated values $\mathcal{L}(x)$, $\ell(x)$, and $L(x)$.

\begin{table}[t]\label{tb:mcnuggetlengthsets}
\small{
\[
\begin{array}{||c|c|c|c||c|c|c|c||c|c|c|c||}
\hline
x & \mathcal{L}(x) & \ell(x) & L(x) & x & \mathcal{L}(x) & \ell(x) & L(x) & x & \mathcal{L}(x) & \ell(x) & L(x) \\
\hline
\mathbf{0} & \{0\} & 0 & 0 &
\mathbf{27} & \{3,4\} & 3 & 4 &
\mathbf{41} & \{4\} & 4 & 4 \\
\hline
\mathbf{6} & \{1\} & 1 & 1 &
\mathbf{29} & \{2\} & 2 & 2 &
\mathbf{42} & \{5,6,7\} & 5 & 7 \\
\hline
\mathbf{9} & \{1\} & 1 & 1 &
\mathbf{30} & \{4,5\} & 4 & 5 &
\mathbf{44} & \{4,5\} & 4 & 5 \\
\hline
\mathbf{12} & \{2\} & 2 & 2 &
\mathbf{32} & \{3\} & 3 & 3 &
\mathbf{45} & \{5,6,7\} & 5 & 7 \\
\hline
\mathbf{15} & \{2\} & 2 & 2 &
\mathbf{33} & \{4,5\} & 4 & 5 &
\mathbf{46} & \{3\} & 3 & 3 \\
\hline
\mathbf{18} & \{2,3\} & 2 & 3 &
\mathbf{35} & \{3\} & 3 & 3 &
\mathbf{47} & \{4,5\} & 4 & 5 \\
\hline 
\mathbf{20} & \{1\} & 1 & 1 &
\mathbf{36} & \{4,5,6\} & 4 & 6 &
\mathbf{48} & \{6,7,8\} & 6 & 8 \\
\hline
\mathbf{21} & \{3\} & 3 & 3 &
\mathbf{38} & \{3,4\} & 3 & 4 &
\mathbf{49} & \{3\} & 3 & 3 \\
\hline
\mathbf{24} & \{3,4\} & 3 & 4 &
\mathbf{39} & \{5,6\} & 5 & 6 &
\mathbf{50} & \{5,6\} & 5 & 6 \\
\hline 
\mathbf{26} & \{2\} & 2 & 2 &
\mathbf{40} & \{2\} & 2 & 2 & 
& & & \\
\hline
\end{array}
\]}
\caption{The McNugget numbers from 0 to 50 with $\mathcal{L}(x)$, $\ell(x)$, and $L(x)$.}
\end{table}

The following recent result describes the functions $L(x)$ and $\ell(x)$ for elements $x \in \nm$ that are \textit{sufficiently large} with respect to the generators.  Intuitively, Theorem~\ref{t:maxminlenquasi} says that for ``most'' elements $x$, any factorization with maximal length is almost entirely comprised of $n_1$, so $L(x + n_1)$ is obtained by taking a maximum length factorization for $x$ and adding one additional copy of $n_1$.  In general, the ``sufficiently large'' hypothesis is needed, since, for example, both $41 = 2 \cdot 9 + 1 \cdot 23$ and $50 = 5 \cdot 10$ are maximum length factorizations in the numerical monoid $\langle 9, 10, 23 \rangle$.  

\begin{thm}[{\cite[Theorems~4.2 and~4.3]{BOP1}}]\label{t:maxminlenquasi}
Suppose $\nm$ is a numerical monoid.  If $x > n_1n_k$, then
\[
L(x + n_1) = L(x) + 1, 
\]
and if $x > n_{k-1}n_k$, then
\[
\ell(x + n_k) = \ell(x) + 1.
\]
\end{thm}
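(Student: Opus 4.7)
The plan is to prove each equality by two opposite inequalities, one of which is automatic. For the max-length statement, appending a copy of $n_1$ to any max-length factorization of $x$ yields $L(x+n_1)\geq L(x)+1$; symmetrically, appending a copy of $n_k$ to a min-length factorization of $x$ shows $\ell(x+n_k)\leq \ell(x)+1$. The content of the theorem therefore lies in the reverse inequalities, and I will prove them by a unified strategy: show that under the size hypothesis on $x$, every optimal factorization of $x+n_1$ (respectively $x+n_k$) must actually contain a copy of $n_1$ (respectively $n_k$); stripping that copy off then gives a factorization of $x$ whose length is exactly one less than the optimum.

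For the max-length direction, I claim that for any $z>n_1n_k$, every max-length factorization $(a_1,\ldots,a_k)$ of $z$ satisfies $a_1\geq 1$. Suppose, for contradiction, that $a_1=0$, and let $N=a_2+\cdots+a_k$, so $z$ is expressed as a sum of $N$ generators $n_{i_1},\ldots,n_{i_N}$ each exceeding $n_1$. If $N\leq n_1-1$, then $z\leq N n_k\leq (n_1-1)n_k<n_1n_k$, already contradicting $z>n_1n_k$. Otherwise $N\geq n_1$, and the $N+1$ partial sums $s_0=0,s_1,\ldots,s_N$ occupy only $n_1$ residue classes modulo $n_1$, so by pigeonhole there exist $0\leq u<v\leq N$ with $s_v-s_u=mn_1$ for some $m\geq 1$. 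Because every $n_{i_j}>n_1$, the equality $mn_1=\sum_{j=u+1}^{v}n_{i_j}$ forces $mn_1>(v-u)n_1$, so $m\geq v-u+1$. Replacing the sub-block $n_{i_{u+1}},\ldots,n_{i_v}$ by $m$ copies of $n_1$ is a valid factorization of $z$ of strictly greater length, contradicting maximality. Applied to $z=x+n_1$, the claim gives $L(x)\geq L(x+n_1)-1$ as desired.

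The min-length case is the mirror image: under the assumption that a min-length factorization of $z>n_{k-1}n_k$ has $a_k=0$, I take $N'=a_1+\cdots+a_{k-1}$ and repeat the partial-sum argument modulo $n_k$. Either $N'\leq n_k-1$ and $z\leq N' n_{k-1}\leq (n_k-1)n_{k-1}<n_{k-1}n_k$ immediately contradicts the hypothesis, or $N'\geq n_k$ and pigeonhole furnishes a sub-block summing to $mn_k$; but now every $n_{i_j}<n_k$ gives $m<v-u$, so swapping the block for $m$ copies of $n_k$ produces a strictly shorter factorization, contradicting minimality. Hence $a_k\geq 1$, and stripping a copy of $n_k$ completes the argument. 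The main obstacle is the pigeonhole-trade step: one must recognize that partial sums of the non-$n_1$ (respectively non-$n_k$) generators, reduced modulo the missing generator, always produce a tradable subsum, and that the strict inequalities $n_{i_j}>n_1$ and $n_{i_j}<n_k$ make the trade strictly length-improving in the correct direction. Once that is in hand, the thresholds $n_1n_k$ and $n_{k-1}n_k$ in the hypotheses drop out automatically from the two elementary bounds $z\leq (n_1-1)n_k$ and $z\leq (n_k-1)n_{k-1}$.
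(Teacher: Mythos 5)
Your proof is correct, but note that the paper itself offers no proof of this statement to compare against: Theorem~\ref{t:maxminlenquasi} is quoted from \cite{BOP1} as a black box. Judged on its own terms, your argument is sound. The two ``automatic'' inequalities $L(x+n_1)\ge L(x)+1$ and $\ell(x+n_k)\le \ell(x)+1$ are exactly as easy as you say, and the pigeonhole step is airtight: with $a_1=0$ and $N\ge n_1$ summands, two of the $N+1$ partial sums agree modulo $n_1$, the resulting subsum $mn_1$ consists of $v-u$ generators each exceeding $n_1$, so $m\ge v-u+1$ and the trade strictly lengthens the factorization; dually, generators below $n_k$ force $m\le v-u-1$ and a strictly shorter factorization. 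The case split against the thresholds $(n_1-1)n_k<n_1n_k$ and $(n_k-1)n_{k-1}<n_{k-1}n_k$ is also correct (indeed your argument only needs $x+n_1>n_1n_k$, slightly weaker than the stated hypothesis). It is worth observing that your skeleton---show every optimal factorization of a large element must use the extreme generator, then strip one copy---is precisely the skeleton the authors \emph{do} use later for the special case $\langle 6,9,20\rangle$ in the proof of Theorem~\ref{t:mcnuggetmax}, where the explicit trades $2\cdot 9=3\cdot 6$ and $3\cdot 20 = 10\cdot 6$ substitute for your pigeonhole subsum; your version shows how to manufacture such a trade in an arbitrary numerical monoid at the cost of the ``sufficiently large'' hypothesis.
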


We will return to this result in Section~\ref{sec:lencalculations}, where we give a closed formula for $L(x)$ and $\ell(x)$ that holds for all $x \in \cmm$.  

Given our definitions to this point, we can now mention perhaps the most heavily studied invariant in the theory of non-unique factorizations.  For $x \in \nm$, the ratio 
\[
\rho(x) = \frac{L(x)}{\ell(x)},
\]
is called the \textit{elasticity} of $x$, and 
\[
\rho(\nm) = \sup\{\rho(x) \mid x\in \nm\}
\]
is the \textit{elasticity} of $\nm$.  The elasticity of an element $n \in \nm$ measures how ``spread out'' its factorization lengths are; the larger $\rho(n)$ is, the more spread out $\mathcal L(n)$~is.  To this end, the elasticity $\rho(\nm)$ encodes the highest such ``spread'' throughout the entire monoid.  For example, if $\rho(\nm) = 2$, then the maximum factorization length of any element $n \in \nm$ is at most twice its minimum factorization length.  

A formula for the elasticity of a general numerical monoid, given below, was given in \cite{CHM}, and was the result of an undergraduate research project.  

\begin{thm}[\cite{CHM}, Theorem~2.1 and Corollary~2.3]\label{t:generalelasticity}
The elasticity of the numerical monoid $\nm$ is 
\[
\rho(\nm) = \frac{n_k}{n_1}.
\]
Moreover, $\rho(n) = \frac{n_k}{n_1}$ precisely when $n$ is an integer multiple of the least common multiple of $n_1$ and $n_k$, and for any rational $r < \frac{n_k}{n_1}$, there are only finitely many elements $x \in \nm$ with $\rho(x)\leq r$.  
\end{thm}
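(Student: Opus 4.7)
The plan is to split the proof into three parts: the upper bound $\rho(\nm) \leq n_k/n_1$, the characterization of elements achieving this maximum, and the density claim about elasticities bounded away from $n_k/n_1$.

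For the upper bound, I would start from the observation that for any factorization $(a_1, \ldots, a_k) \in \mathsf Z(x)$, the inequalities $n_1 \leq n_i \leq n_k$ yield
\[
n_1 \sum_i a_i \;\leq\; \sum_i a_i n_i \;=\; x \;\leq\; n_k \sum_i a_i.
\]
Applying the left inequality to a maximum-length factorization and the right to a minimum-length factorization gives $L(x) \leq x/n_1$ and $\ell(x) \geq x/n_k$, hence $\rho(x) \leq n_k/n_1$, which passes to the supremum. For $x = m\operatorname{lcm}(n_1, n_k)$ the factorizations consisting only of copies of $n_1$ and only of copies of $n_k$ both exist, have lengths $x/n_1$ and $x/n_k$ respectively, and must therefore realize $L(x)$ and $\ell(x)$; so $\rho(x) = n_k/n_1$. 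Conversely, if $\rho(x) = n_k/n_1$, then the sandwich above must be tight at both ends, forcing the max-length factorization to consist only of $n_1$'s (so $n_1 \mid x$) and the min-length factorization to consist only of $n_k$'s (so $n_k \mid x$); thus $\operatorname{lcm}(n_1, n_k) \mid x$.

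For the finiteness statement, I would strengthen the one-sided bounds to $L(x) \geq x/n_1 - C_1$ and $\ell(x) \leq x/n_k + C_2$, where $C_1, C_2$ depend only on $\nm$. The cleanest route is via the Apéry set $\operatorname{Ap}(\nm, n_1) = \{w_0, \ldots, w_{n_1 - 1}\}$, where $w_r$ is the smallest element of $\nm$ congruent to $r \bmod n_1$: writing $x = w_r + a n_1$ and prepending $a$ copies of $n_1$ to any factorization of $w_r$ produces a factorization of $x$ of length at least $(x - W)/n_1$, where $W = \max_r w_r$. A symmetric argument relative to $\operatorname{Ap}(\nm, n_k)$ gives the second bound. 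Substituting both into $L(x) \leq r\,\ell(x)$ yields
\[
x\bigl(1/n_1 - r/n_k\bigr) \;\leq\; C_1 + r C_2,
\]
and since $1/n_1 - r/n_k > 0$ precisely when $r < n_k/n_1$, only finitely many $x$ can satisfy the bound.

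The main obstacle is the finiteness step — specifically, packaging the Apéry data to show that cofinitely many $x \in \nm$ admit a factorization almost entirely composed of $n_1$'s (and symmetrically of $n_k$'s). The inequalities in the first paragraph are immediate from the definitions, and the equality analysis is a routine consequence of tracking when those inequalities are strict.
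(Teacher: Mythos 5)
The paper does not prove this theorem; it is quoted from \cite{CHM} without argument, so there is no in-text proof to compare against. Your proposal is correct and complete as a standalone argument: the sandwich $n_1\sum_i a_i \le x \le n_k\sum_i a_i$ gives the upper bound $\rho(x)\le n_k/n_1$ and, by tracking when equality holds, the characterization of the extremal elements (using minimality of the generating set so that only $n_1$ attains the value $n_1$); the Ap\'ery-set step correctly supplies the additive constants $C_1, C_2$ making $L(x)\ge x/n_1 - C_1$ and $\ell(x)\le x/n_k + C_2$, from which the finiteness claim follows by the linear inequality you display. This is essentially the standard route taken in the literature for such elasticity computations, so there is nothing to flag beyond the observation that the real content lies exactly where you identified it, in the lower bound on $L(x)$ and upper bound on $\ell(x)$ for large $x$.
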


The significance of the final statement in Theorem~\ref{t:generalelasticity} is that there are rationals $1 \leq q \leq \frac{n_k}{n_1}$ that do not lie in the set $\{\rho(x) \mid x \in \nm\}$ and hence $\{\rho(x) \mid x \in \nm\} \subsetneq \mathbb Q \cap [1, \frac{n_k}{n_1}]$ (to use terminology from the literature, numerical monoids are not \emph{fully elastic}).  Figure~\ref{fig:mcnuggetelasticity} depicts the elasticities of elements of $\cmm$ up to $n = 400$; indeed, as $n$ increases, the elasticity $\rho(n)$ appears to converge to $\frac{10}{3} = \rho(\cmm)$.  In general, the complete image $\{\rho(x) \mid x \in \nm\}$ has been determined by Barron, O'Neill, and Pelayo in another student co-authored paper \cite[Corollary 4.5]{BOP1}; we direct the reader there for a thorough mathematical description of Figure~\ref{fig:mcnuggetelasticity}.  

\begin{figure}[tbp]
\begin{center}
\includegraphics[width=5in]{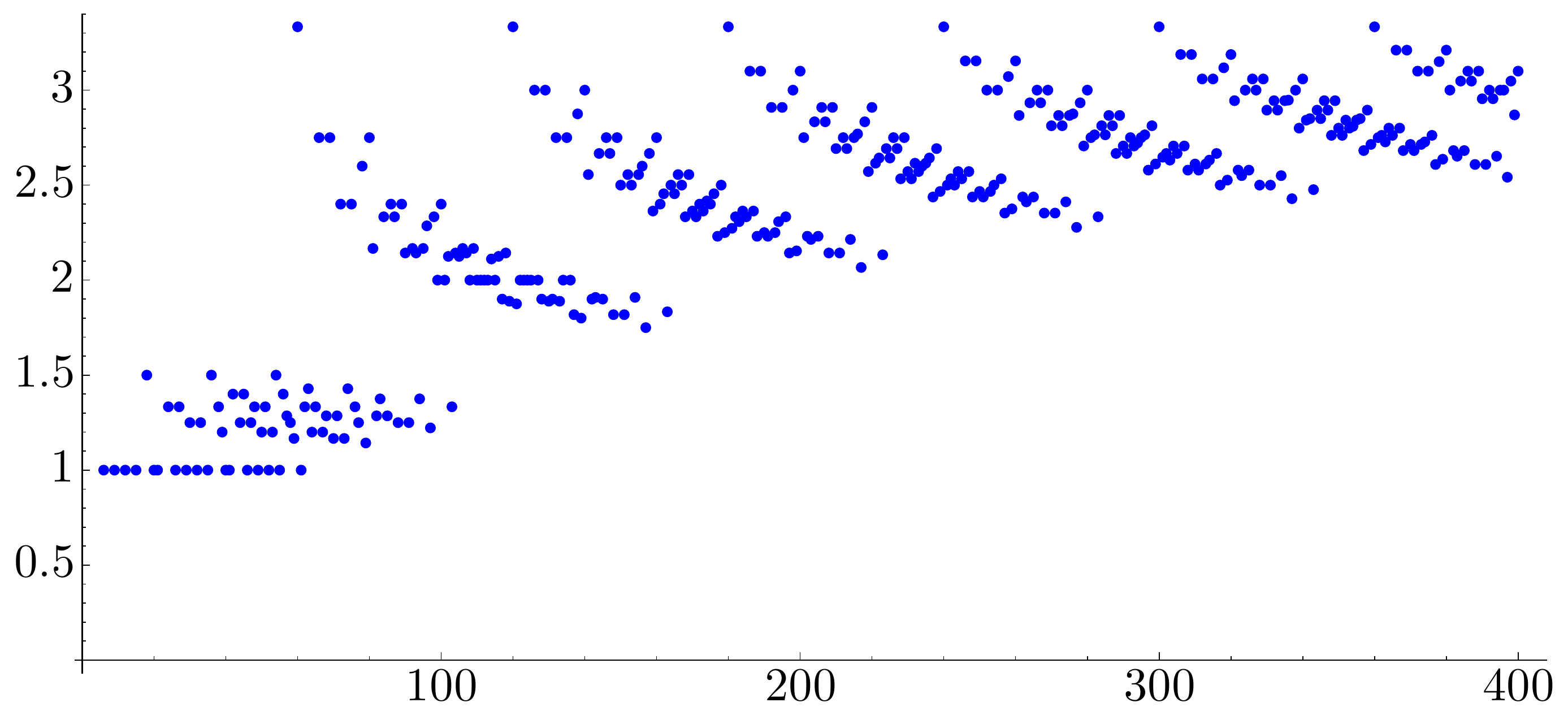}
\end{center}
\caption[A plot of the elasticities in the McNugget monoid]{A plot depicting the elasticity function $\rho(n)$ for $n \in \cmm$.}
\label{fig:mcnuggetelasticity}
\end{figure}

We close our discussion of elasticity with the following.

\begin{cor}\label{c:mcnuggetelasticity}
The elasticity of the McNugget monoid is
\[
\rho(\cmm) = \frac{10}{3}.
\]
\end{cor}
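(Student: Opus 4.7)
The plan is to derive Corollary~\ref{c:mcnuggetelasticity} as an immediate specialization of Theorem~\ref{t:generalelasticity}. The minimal generating set of $\cmm$ is $\{6, 9, 20\}$, as established earlier in the paper, so in the notation of Theorem~\ref{t:generalelasticity} we have $n_1 = 6$ and $n_k = 20$. Substituting into the formula $\rho(\nm) = n_k/n_1$ yields $\rho(\cmm) = 20/6 = 10/3$, which is precisely the claim.

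To make the corollary feel less like a black-box invocation, I would exhibit a single element of $\cmm$ witnessing the supremum. The natural candidate is the least common multiple of the extremal generators, namely $\mathrm{lcm}(6,20) = 60$, which by Theorem~\ref{t:generalelasticity} should realize elasticity exactly $10/3$. Indeed, $60 = 10 \cdot 6 = 3 \cdot 20$, so the factorizations $(10,0,0)$ and $(0,0,3)$ lie in $\mathsf Z(60)$, giving $L(60) \geq 10$ and $\ell(60) \leq 3$, hence $\rho(60) \geq 10/3$. Combined with the upper bound $\rho(\cmm) \leq 10/3$ from Theorem~\ref{t:generalelasticity}, we conclude $\rho(60) = 10/3$ and the supremum defining $\rho(\cmm)$ is in fact attained.

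There is no real obstacle here: the only step that requires any thought is confirming that the hypotheses of Theorem~\ref{t:generalelasticity} apply, namely that $\{6,9,20\}$ is the minimal generating set of $\cmm$ (already verified in Section~\ref{sec:generality}) and that the generators are listed in increasing order so that $n_1 = 6$ and $n_k = 20$. Once these are in place, the computation is a one-line arithmetic substitution, and the witness $60$ together with Figure~\ref{fig:mcnuggetelasticity} makes the statement concrete for the reader.
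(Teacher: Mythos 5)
Your proposal is correct and matches the paper's (implicit) argument exactly: the corollary is stated as an immediate consequence of Theorem~\ref{t:generalelasticity} applied to the minimal generating set $\{6,9,20\}$ of $\cmm$, giving $\rho(\cmm) = n_k/n_1 = 20/6 = 10/3$. Your additional witness $60$ with factorizations $(10,0,0)$ and $(0,0,3)$ is a nice concrete touch, consistent with the paper's own later use of $\mathsf Z(60)$, but it does not change the route.
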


While a popular invariant to study, the elasticity only tells us about the largest and smallest elements of $\mathcal{L}(x)$.  Looking at Table 2, it appears that the length sets of the first few McNugget numbers are uniformly constructed (each is of the form $[a,b]\cap \mathbb N$ for positive integers $a$ and $b$).  One need not look too much further to break this pattern; the element $60 \in \cmm$ has
\[
\mathsf Z(60) = \{(0,0,3), (1,6,0), (4,4,0), (7,2,0), (10,0,0)\}
\]
and thus
\[
\mathcal L(60) = \{3, 7, 8, 9, 10\}.
\]
This behavior motivates the following ``finer'' factorization invariant.  Fix $x \in \nm$, and let $\mathcal L(x) = \{m_1, \ldots, m_t\}$ with $m_1 < m_2 < \cdots < m_t$.
Define the \emph{delta set} of $x$ as
\[
\Delta(x) = \{m_i - m_{i-1} \mid 2 \leq i \leq t\},
\]
and the \emph{delta set} of $\nm$ as
\[
\Delta(\nm) = \bigcup_{x \in \nm} \Delta(x).
\]
The study of the delta sets of numerical monoids (and more generally, of cancellative commutative monoids) has been an extremely popular topic; many such papers feature results from REU programs (see, for instance, \cite{BCKR,CDHK,CGLMS,CHK,CKLNZ,CK}).

From Table~\ref{tb:mcnuggetexpansions} we see that the McNugget numbers from 1 to 50 all have delta set $\emptyset$ or $\{1\}$, and we have further showed that $\Delta(60) = \{1,4\}$.  What is the delta set of $\cmm$ and moreover, what possible subsets of this set occur as $\Delta(x)$ for some $x \in \cmm$?  We will address those questions is Section~\ref{sec:deltasetcalculations}, with the help of a result from \cite{CHK}, stated below as Theorem~\ref{t:deltaperiodic}.  

One of the primary difficulties in determining the set $\Delta(\nm)$ is that even though each element's delta set $\Delta(x)$ is finite, the definition of $\Delta(\nm)$ involves the union of infinitely many such sets.  The key turns out to be a description of the sequence $\{\Delta(x)\}_{x \in \nm}$ for large $x$ (note that this is a sequence of sets, not integers).  Baginski conjectured during the writing of~\cite{BCKR} that this sequence is eventually periodic, and three years later this was settled in the affirmative, again in an REU project.  

\begin{thm}[{\cite[Theorem~1 and Corollary~3]{CHK}}]\label{t:deltaperiodic}
For $x \in \nm$, 
\[
\Delta(x) = \Delta(x + n_1n_k)
\]
whenever $x > 2kn_2n_k^2$.  In particular, 
\[
\Delta(\nm) = \bigcup_{x \in D} \Delta(x)
\]
where $D = \{x \in \nm \mid x \le 2kn_2n_k^2 + n_1n_k\}$ is a finite set.
\end{thm}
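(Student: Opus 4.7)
The ``in particular'' statement follows readily from the first. Assuming $\Delta(x) = \Delta(x + n_1 n_k)$ for all $x > 2k n_2 n_k^2$, any $\Delta(x)$ with $x > 2k n_2 n_k^2 + n_1 n_k$ coincides with $\Delta(y)$ for some $y \in D$, obtained by repeatedly subtracting $n_1 n_k$; hence $\Delta(\nm)$ reduces to the finite union $\bigcup_{x \in D} \Delta(x)$.

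The substantive task is the periodicity identity itself, and my plan exploits the fundamental trade $n_k \cdot n_1 = n_1 \cdot n_k$ in $\nm$. This trade supplies two natural maps $\mathsf Z(x) \to \mathsf Z(x + n_1 n_k)$: the first sends $(a_1, a_2, \ldots, a_k)$ to $(a_1 + n_k, a_2, \ldots, a_k)$, shifting length by $+n_k$; the second sends $(a_1, \ldots, a_k)$ to $(a_1, \ldots, a_{k-1}, a_k + n_1)$, shifting length by $+n_1$. These immediately yield the inclusions $\mathcal L(x) + n_1 \subseteq \mathcal L(x + n_1 n_k)$ and $\mathcal L(x) + n_k \subseteq \mathcal L(x + n_1 n_k)$, and they let us ``pull back'' any factorization of $x + n_1 n_k$ satisfying $b_1 \geq n_k$ or $b_k \geq n_1$ to a factorization of $x$.

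The heart of the argument is then a saturation property: for $x > 2k n_2 n_k^2$, every length $\ell \in \mathcal L(x + n_1 n_k)$ is realized by some factorization admitting such a pullback. The natural attack is a pigeonhole argument. If $(b_1, \ldots, b_k) \in \mathsf Z(x + n_1 n_k)$ has both $b_1 < n_k$ and $b_k < n_1$, then the middle coordinates $b_2, \ldots, b_{k-1}$ must carry nearly all the weight, forcing some $b_i$ with $2 \leq i \leq k-1$ to exceed a threshold dictated by the bound. A controlled trade on this large $b_i$ (replacing a carefully chosen block of $n_i$'s by a combination of $n_1$'s and $n_k$'s with matching total value) produces a factorization of the same length that does admit a pullback. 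With saturation in hand, $\mathcal L(x + n_1 n_k)$ is tightly controlled by $\mathcal L(x)$ through the two shift maps, and a direct comparison of consecutive differences on either side yields $\Delta(x + n_1 n_k) = \Delta(x)$.

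The main obstacle is the saturation step. Producing the required trades on the middle generators $n_2, \ldots, n_{k-1}$ that preserve (or sufficiently control) factorization lengths is nontrivial, and verifying that the threshold $2k n_2 n_k^2$ is large enough to rule out every pathological factorization requires careful pigeonhole bookkeeping across the $k$ generators --- the factor $n_2$ hints at a residue argument while $n_k^2$ reflects the worst-case depth of the trade. Everything else, namely the two shift maps and the reduction of the corollary, is comparatively routine.
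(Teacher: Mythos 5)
First, a caveat: the paper gives no proof of this statement at all—it is imported verbatim from \cite{CHK}—so your proposal can only be measured against the argument in that reference. Your architecture does match it in its first two stages: the two maps $\mathsf Z(x)\to\mathsf Z(x+n_1n_k)$ coming from the trade $n_k\cdot n_1=n_1\cdot n_k$, the pullback of any factorization of $x+n_1n_k$ having $b_1\ge n_k$ or $b_k\ge n_1$, and the pigeonhole forcing a large middle coordinate when neither holds are all the right ingredients. The length-preserving trade you gesture at can be made explicit as $(n_k-n_1)\,n_i=(n_k-n_i)\,n_1+(n_i-n_1)\,n_k$, which swaps $n_k-n_1$ copies of $n_i$ for $n_k-n_i$ copies of $n_1$ plus $n_i-n_1$ copies of $n_k$ without changing the length; iterating it drives $b_1$ or $b_k$ past the pullback threshold, and the bound $2kn_2n_k^2$ is sized so that some $b_i$ is large enough to iterate. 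So the saturation step, though only sketched, is a viable plan.

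The genuine gap is in the step you declare routine. Saturation yields $\mathcal L(x+n_1n_k)=\bigl(\mathcal L(x)+n_1\bigr)\cup\bigl(\mathcal L(x)+n_k\bigr)$, but the delta set of a union of two translates of a set is in general \emph{not} the delta set of the original set: for $A=\{0,5\}$ one has $\Delta(A)=\{5\}$, while $\Delta\bigl(A\cup(A+2)\bigr)=\Delta(\{0,2,5,7\})=\{2,3\}$. Hence ``a direct comparison of consecutive differences on either side'' does not close the argument. What is needed---and what \cite{CHK} supplies with an additional lemma---is structural control on \emph{where} the gaps of $\mathcal L(x)$ sit: the two translates overlap except in a bounded window above $\ell(x)+n_1$ (where only $\mathcal L(x)+n_1$ contributes) and a bounded window below $L(x)+n_k$ (where only $\mathcal L(x)+n_k$ contributes), and one must show that in the overlap the translate entering ``from the other side'' lands inside gap-free stretches, so that the union's successive differences reproduce exactly those of $\mathcal L(x)$. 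This is precisely the content of the localization step in the paper's own proof of Theorem~\ref{t:mcnuggetdelta} (``any gap in successive lengths in $\mathcal L(x)$ occurs between $\ell(x)$ and $\ell(x)+4$''). Without an analogue of that lemma, your final deduction fails, so the proposal has a hole exactly where it claims to be finished.
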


Thus $\Delta(\nm)$ can be computed in finite time.  The bound given in Theorem~\ref{t:deltaperiodic} is far from optimal; it is drastically improved in \cite{GMV}, albeit with a much less concise formula.  For convenience, we will use the bound given above in our computation of $\Delta(\cmm)$ in Section~\ref{sec:deltasetcalculations}.

\subsection{Beyond the length set}
\label{sec:beyondlengthset}

We remarked earlier that no element of a numerical monoid is prime.  Let's consider this more closely in $\cmm$.  For instance, since 6 is not prime, there is a sum $x + y$ in $\cmm$ such that 6 divides $x + y$, but 6 does not divide $x$ nor does 6 divide $y$ (take, for instance, $x = y = 9$).  But note that 6 satisfies the following slightly weaker property.  Suppose that 6 divides a sum $x_1+ \cdots + x_t$ where $t > 3$.  Then there is a subsum of at most 3 of the $x_i$'s that 6 does divide.   To see this, notice that if 6 divides any of the $x_i$'s, then we are done.  So suppose it does not.  If 9 divides both $x_i$ and $x_j$, then 6 divides $x_i + x_j$ since 6 divides $9 + 9$.  If no two $x_i$'s are divisible by 9, then at least 3 $x_i$'s are divisible by 20, and nearly identical reasoning to the previous case completes the argument.  This value of 3 offers some measure as to how far 6 is from being prime, and motivates the following definition.  

\begin{definition}\label{d:omega}
Let $\nm$ be a numerical monoid.  For any nonzero $x \in \nm$, define $\omega(x) = m$ if $m$ is the smallest positive integer such that whenever $x$ divides $x_1 + \cdots + x_t$, with $x_i \in \nm$, then there is a set $T \subset \{1,2,\ldots,t\}$ of indices with $|T| \leq m$ such that $x$ divides $\sum_{i \in T} x_i$.  
\end{definition}  

Using Definition~\ref{d:omega}, a prime element would have $\omega$-value 1, so $\omega(x)$ can be interpreted as a measure of how far $x$ is from being prime.  In $\cmm$, we argued that $\omega(6) = 3$; a similar argument yields $\omega(9) = 3$ and $\omega(20) = 10$.  Notice that the computation of $\omega(x)$ is dependent more on $\mathsf Z(x)$ than $\mathcal L(x)$, and hence encodes much different information than either $\rho(x)$ or $\Delta(x)$.   

Let us more closely examine the argument that $\omega(6) = 3$.  The key is that 6 divides $9 + 9$ and $20 + 20 + 20$, but does not divide any subsum of either.  Indeed, the latter of these expressions yields a lower bound of $\omega(6) \ge 3$, and the given argument implies that equality holds.  With this in mind, we give the following equivalent form of Definition~\ref{d:omega}, which often simplifies the computation of $\omega(x)$.  
 
\begin{thm}[{\cite[Proposition~2.10]{OP2}}]\label{t:omegaequiv}
Suppose $\nm$ is a numerical monoid and $x \in \nm$.  The following conditions are equivalent.
\begin{enumerate}
\item[(a)] $\omega(x) = m$.
\item[(b)] $m$ is the maximum length of a sum $x_1 + \cdots + x_t$ of irreducible elements in $\nm$ with the property that (i) $x$ divides $x_1 + \cdots + x_t$, and (ii) $x$ does not divide $x_1 + \cdots + x_{j-1} + x_{j+1} + \cdots + x_t$ for $1 \le j \le t$.  
% If $x$ divides $x_1 + \cdots + x_n$ with each $x_i$ irreducible and  $n \ge m$, then $x$ divides $x_{i_1}+ \cdots + x_{i_m}$  for some  $\{i_1 , \ldots, i_m\} \subseteq \{1, \ldots, n\}$, and there are irreducible $x_1, \ldots, x_m$ such that $x$ divides $x_1 + \cdots + x_m$, but $x$ divides no proper subsum of the $x_i$'s.  
\end{enumerate}
\end{thm}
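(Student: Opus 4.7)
The plan is to prove the equivalence by showing $\omega(x) = B(x)$, where $B(x)$ denotes the maximum described in (b). I first reformulate both sides using a common language of minimal divisible sums. Condition (ii) — removing any single element breaks divisibility — is equivalent to the a priori stronger statement that no proper sub-sum of $x_1 + \cdots + x_t$ is divisible by $x$. This rests on the monotonicity of divisibility in $\nm$: if $y - x \in \nm$ and $z \in \nm$, then $y + z - x = (y - x) + z \in \nm$. So a divisible proper sub-sum indexed by $T \subsetneq \{1, \ldots, t\}$ can be padded by $\sum_{i \notin T,\, i \ne j} x_i$ for any $j \notin T$ to produce a divisible sum of the form $\sum_{i \ne j} x_i$, contradicting (ii). The same monotonicity shows that $\omega(x)$ equals the supremum of $t$ over minimal divisible sums $Y = y_1 + \cdots + y_t$ in $\nm$ (where ``minimal'' means $x \mid \sum y_i$ but no proper sub-sum is divisible): any divisible sub-sum of minimum size within a divisible sum is itself a minimal divisible sum, realizing both the defining inequality for $\omega(x)$ and its sharpness. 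Under this reformulation, $B(x)$ is the analogous supremum restricted to sums of irreducibles.

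The inequality $B(x) \le \omega(x)$ is then immediate. For $B(x) \ge \omega(x)$, I would take a minimal divisible sum $Y = y_1 + \cdots + y_t$ in $\nm$ of length as large as possible and iteratively refine it into a minimal divisible sum of irreducibles of length $\ge t$. If some $y_i$ is reducible, write $y_i = a + b$ with $a, b \in \nm \setminus \{0\}$ and $a$ an irreducible appearing in a factorization of $y_i$, and consider $Y'' = y_1 + \cdots + y_{i-1} + a + b + y_{i+1} + \cdots + y_t$ of length $t + 1$. A case analysis of the proper sub-sums of $Y''$ shows that the only candidates for being divisible by $x$ are $Y - a$ and $Y - b$: sub-sums of $Y''$ avoiding both $a$ and $b$, or containing both, correspond to proper sub-sums of $Y$ and so are not divisible by minimality of $Y$, while a sub-sum containing exactly one of $a, b$ can be completed by adding the missing piece in $\nm$ to produce a divisible sub-sum of $Y$ containing $y_i$, which by minimality must be all of $Y$. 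If neither $Y - a$ nor $Y - b$ is divisible, then $Y''$ itself is a minimal divisible sum of length $t + 1$. Otherwise, say $Y - b$ is divisible; a parallel check shows $Y - b = y_1 + \cdots + y_{i-1} + a + y_{i+1} + \cdots + y_t$ is minimal divisible of length $t$, with $y_i$ replaced by the strictly smaller $a$. Either way, the potential $\Phi(Y) := \sum_{i : y_i \text{ reducible}} y_i$ strictly decreases, so the iteration terminates in a minimal divisible sum of irreducibles of length $\ge t$, proving $B(x) \ge \omega(x)$.

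The main obstacle is the completion argument in the refinement case analysis: one must show that a divisible proper sub-sum of $Y''$ containing $a$ but not $b$ must equal $Y - b$. The key step leverages the identity $a + b = y_i$ — adding $b \in \nm$ to the supposed divisible sub-sum produces, by monotonicity, a divisible sub-sum of $Y$ containing $y_i$, and minimality of $Y$ forces this to be all of $Y$, so the original sub-sum of $Y''$ was $Y - b$. A secondary concern is verifying that $\Phi$ strictly decreases in both branches of the dichotomy; this is where the choice of $a$ as an irreducible matters, since it ensures that in the ``refinement succeeds'' branch $y_i$'s contribution to $\Phi$ is replaced only by that of the strictly smaller $b$ (and only if $b$ is still reducible), while in the ``$Y - b$ divisible'' branch $y_i$ is replaced by $a < y_i$ outright.
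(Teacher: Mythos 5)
The paper does not actually prove Theorem~\ref{t:omegaequiv}; it quotes it from \cite[Proposition~2.10]{OP2}, so there is no internal proof to compare against. Your argument is a correct, self-contained proof. Both preliminary reductions are sound: monotonicity of divisibility (if $x$ divides $y$ and $z \in \nm$, then $x$ divides $y+z$) correctly upgrades condition (ii) to ``no proper sub-sum is divisible by $x$,'' and identifying $\omega(x)$ with the largest size of a minimal divisible sum is exactly the right intermediate object (one inequality because minimality forces the witnessing set $T$ to be everything, the other by extracting a minimum-cardinality divisible sub-sum from an arbitrary divisible sum). The refinement step is the heart of the matter, and your case analysis is complete: splitting $y_i = a + b$ creates only the two new candidate divisible proper sub-sums $Y - a$ and $Y - b$, since any other sub-sum containing exactly one of $a,b$ could be padded with the missing half to contradict minimality of $Y$; in every outcome you retain a minimal divisible sum of length at least $t$ while the potential $\Phi$ strictly drops, so the process terminates in a bullet made of irreducibles. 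Two small remarks. First, you need not start from a minimal divisible sum of \emph{maximal} length (whose existence requires a word about the supremum being attained): running the refinement on an arbitrary minimal divisible sum of length $t$ already gives $B(x) \ge t$, and taking the supremum over $t$ yields $B(x) \ge \omega(x)$. Second, the decomposition $y_i = a + b$ with $a$ irreducible and $b \ne 0$ quietly uses that every nonzero element of $\nm$ is a sum of generators (Proposition~\ref{p:generalatoms}); this is immediate from the definition of $\nm$ but is worth stating, since it is where the restriction to irreducibles in (b) becomes harmless.
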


\noindent
The sum $x_1 + \cdots + x_t$ alluded to in part (b) above is called a \textit{bullet} for~$x$.  Hence, $20 + 20 + 20$ is a bullet for $6$ in $\cmm$, and moreover has maximal length.  The benefit of Theorem~\ref{t:omegaequiv} is twofold: (i) each $x \in \nm$ has only finitely many bullets, and (ii) the list of bullets can be computed in a similar fashion to the set $\mathsf Z(x)$ of factorizations.  We refer the reader to \cite{ACKT,BOP2}, both of which give explicit algorithms (again resulting from undergraduate research projects) for computing $\omega$-values.  

Our goal is to completely describe the behavior of the $\omega$-function of the McNugget Monoid.  We do so in Section~\ref{sec:omegacalculations}, using the following result, which is clearly similar in spirit to Theorems \ref{t:maxminlenquasi} and \ref{t:deltaperiodic}.  

% \begin{definition}
% A map $f:\mathbb N \rightarrow \mathbb N$ is a \textit{quasipolynomial} if $f(x) = a_d(x)x^d + \cdots + a_0(x)$, where each $a_i(x)$ is periodic with period $s_i$ and $a_d$ is not identically 0. We call $d$ the degree of $f$, and $s = \mathrm{lcm}\{s_0,...,s_d\}$ the period of $f$. A quasipolynomial $f$ is quasilinear if it has degree 1. A function $g : \mathbb N \rightarrow \mathbb N$ is eventually quasilinear if there exists a quasilinear function $f$ and $N \in \mathbb N$ such that $g(n) = f(n)$ for all $n > N$.
% \end{definition}
% The computation of $\omega$-values uses the following result.  

\begin{thm}[{\cite[Theorem~3.6]{OP1}}]\label{t:omegaquasi}
For $x \in \nm$ sufficiently large,
\[
\omega(x + n_1) = \omega(x) + 1.
\]
In particular, this holds for 
\[
x > \frac{F + n_2}{n_2/n_1 - 1}
\]
where $F = F(\nm)$ is the Frobenius number.  
\end{thm}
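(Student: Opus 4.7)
The plan is to work through the bullet characterization of $\omega$ given in Theorem~\ref{t:omegaequiv}(b): $\omega(y)$ is the maximum length of a sum $\sum a_in_i$ of irreducibles for which the excess $N := \sum a_in_i - y$ lies in $\nm$, while $N - n_i \notin \nm$ for each $i$ with $a_i > 0$. I will write $F = F(\nm)$ throughout.

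The heart of the argument is a structural claim: \emph{for every $y > (F+n_2)/(n_2/n_1 - 1)$, every maximum length bullet for $y$ contains a copy of $n_1$.} To prove this, I first exhibit a long bullet built only from $n_1$. Let $N(y)$ denote the unique element of the Apéry set $\mathrm{Ap}(\nm,n_1) := \{z \in \nm : z - n_1 \notin \nm\}$ that is congruent to $-y \pmod{n_1}$. Then $a_1 := (y + N(y))/n_1$ copies of $n_1$ already form a bullet for $y$: the excess is $N(y) \in \nm$, and $N(y) - n_1 \notin \nm$ by construction. This yields $\omega(y) \ge a_1 \ge y/n_1$. I next bound every bullet $S = \sum_{i \ge 2} a_in_i$ for $y$ that avoids $n_1$. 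Setting $n_{i_0} := \min\{n_i : a_i > 0\}$, the bullet condition forces the excess $N = S - y$ to satisfy $N - n_{i_0} \notin \nm$, whence $N \le F + n_{i_0}$; since every term in $S$ is at least $n_{i_0}$, one also has $|S| \le S/n_{i_0} \le 1 + (y+F)/n_{i_0}$, a quantity decreasing in $n_{i_0}$ and thus maximized at $n_{i_0} = n_2$ to give $|S| \le (y + F + n_2)/n_2$. A direct algebraic rearrangement shows that $y > (F+n_2)/(n_2/n_1 - 1)$ is equivalent to $y/n_1 > (y + F + n_2)/n_2$, so the $n_1$-only bullet strictly beats every $n_1$-avoiding bullet, and any maximum length bullet for $y$ must use $n_1$.

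With the claim in hand, the two inequalities that together give $\omega(x + n_1) = \omega(x) + 1$ follow by direct manipulation. For $\omega(x+n_1) \ge \omega(x) + 1$, take a maximum length bullet $S$ for $x$. By the claim applied to $x$, some term of $S$ equals $n_1$, so in particular $N(x) - n_1 \notin \nm$. Then $S + n_1$ is a bullet for $x + n_1$ of length $\omega(x) + 1$: the excess is unchanged, and removing any term leaves excess $N(x) - n_j$, which fails to lie in $\nm$ either by the bullet property of $S$ (if $j \neq 1$) or because $1 \in \mathrm{supp}(S)$ (if $j = 1$). For the reverse inequality, apply the claim to $x + n_1$ (which also exceeds the threshold) to find a maximum length bullet $T$ containing $n_1$; removing that copy produces a bullet of length $\omega(x + n_1) - 1$ for $x$, since $(T - n_1) - x = T - (x + n_1) \in \nm$ and the remaining bullet obstructions transfer term by term from $T$ to $T - n_1$.

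The principal obstacle is the structural claim. The Apéry-set construction on one side and the length estimate for $n_1$-avoiding bullets on the other are individually elementary, but they must be balanced precisely enough that the crossover of the two bounds produces exactly the threshold $(F+n_2)/(n_2/n_1 - 1)$. Once that inequality is secured, the two directions of the theorem require only the routine bullet bookkeeping described above.
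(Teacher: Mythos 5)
Your proof is correct. Note that the paper itself offers no proof of this statement---it is quoted verbatim from \cite[Theorem~3.6]{OP1}---so there is no in-paper argument to compare against; your argument (build a long bullet from copies of $n_1$ via the Ap\'ery set, bound the length of any $n_1$-free bullet by $(y+F+n_2)/n_2$ using the Frobenius number, observe that the threshold is exactly the crossover of these two bounds, and then transfer maximum-length bullets between $x$ and $x+n_1$ by adding or deleting a copy of $n_1$) is a complete, self-contained proof in the same spirit as the cited source. The only blemish is notational: in the second paragraph you write $N(x)$ for the excess of an arbitrary maximum-length bullet, whereas earlier $N(y)$ denoted the Ap\'ery-set element; what you actually use there is only that the excess minus $n_1$ lies outside $\nm$, which follows from the bullet property because $n_1$ appears in the support, so the logic is unaffected.
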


The similarity between Theorems~\ref{t:maxminlenquasi} and~\ref{t:omegaquasi} is not a coincidence.  While $L(x)$ and $\omega(x)$ are indeed different functions (for instance, $L(6) = 1$ while $\omega(6) = 3$), they are closely related; the $\omega$-function can be expressed in terms of max factorization length that is computed when some collections of generators are omitted.  We direct the interested reader to \cite[Section~6]{BOP2}, where an explicit formula of this form for $\omega(n)$ is given.

% warning: \cmm won't work in section titles!  
% \section{Calculations for \texorpdfstring{$\cmm$}{the Chicken McNugget monoid}}
\section{Calculations for the Chicken McNugget monoid}
\label{sec:mcnuggetcalculations}

In the final section of this paper, we give explicit expressions for $L(x)$, $\ell(x)$, $\Delta(x)$ and $\omega(x)$ for every $x \in \cmm$.  The derivation of each such expression makes use of a theoretical result in Section~\ref{sec:toolkit}.  

We note that each of the formulas provided in this section could also be derived in a purely computational manner, using Theorems~\ref{t:maxminlenquasi}, \ref{t:deltaperiodic}, and~\ref{t:omegaquasi} and the inductive algorithms introduced in \cite{BOP2} (indeed, these computations finish in a reasonably short amount of time using the implementation in the \texttt{numericalsgps} package discussed in Section~\ref{sec:computers}).  However, several of the following results identify an interesting phenomenon that distinguish $\cmm$ from more general numerical monoids (see the discussion preceeding Question~\ref{q:nodissonance}), and the arguments that follow give the reader an idea of how theorems involving factorization in numerical monoids can be proven.  

\subsection{Calculating factorization lengths}
\label{sec:lencalculations}

Theorem~\ref{t:maxminlenquasi} states that $L(x + n_1) = L(x) + 1$ and $\ell(x + n_k) = \ell(x) + 1$ for sufficiently large $x \in \nm$.  but, it was observed during the writing of~\cite{BOP1} that for many numerical monoids, the ``sufficiently large'' requirement is unecessary.  As it turns out, one such example is the McNugget monoid~$\cmm$, which we detail below.  

\begin{thm}\label{t:mcnuggetmax}
For each $x \in \cmm$, $L(x + 6) = L(x) + 1$.  In particular, if we write $x = 6q + r$ for $q, r \in \mathbb N$ and $r < 6$, then
\[
L(x) = \left\{\begin{array}{l@{\qquad}l}
q & \textnormal{if } r = 0 \textnormal{ or } 3, \\
q - 5 & \textnormal{if } r = 1, \\
q - 2 & \textnormal{if } r = 2 \textnormal{ or } 5, \\
q - 4 & \textnormal{if } r = 4, \\
\end{array}\right.
\]
for each $x \in \cmm$.  
\end{thm}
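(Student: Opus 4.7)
The plan is to identify the unique maximum-length factorization of each $x \in \cmm$ explicitly, read off the six-case formula, and deduce the recurrence as a corollary.

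First I would invoke the two length-increasing trades already observed in Example~\ref{chickenunique}. From $2 \cdot 9 = 3 \cdot 6$, any $(a,b,c) \in \mathsf Z(x)$ with $b \geq 2$ can be replaced by $(a+3,\,b-2,\,c)$, gaining one unit of length; and from $3 \cdot 20 = 10 \cdot 6$, any factorization with $c \geq 3$ can be replaced by $(a+10,\,b,\,c-3)$, gaining seven units of length. Iterating these moves until neither applies shows that every $(a,b,c)$ realizing $L(x)$ must satisfy $b \in \{0,1\}$ and $c \in \{0,1,2\}$.

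Next I would exploit the crucial arithmetic fact that the six quantities $9b + 20c$ for $(b,c) \in \{0,1\} \times \{0,1,2\}$, namely $0, 20, 40, 9, 29, 49$, realize the six residue classes $0,2,4,3,5,1 \pmod 6$ respectively — each class exactly once. So for fixed $x \equiv r \pmod 6$ there is at most one admissible $(b,c)$, and the corresponding candidate is $(a,b,c)$ with $a = (x - 9b - 20c)/6$. Writing $x = 6q + r$ and walking through the six residues of $r$ then yields the six formulas exactly as stated: e.g.\ $r = 1$ forces $(b,c) = (1,2)$, $a = q - 8$, length $q - 5$; $r = 3$ forces $(b,c) = (1,0)$, $a = q - 1$, length $q$; and similarly for the other four.

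It remains to check that this candidate is actually a valid factorization, i.e.\ that $a \geq 0$ for every $x \in \cmm$ in each residue class. The quantities $9b + 20c$ displayed above are precisely the minimal elements of $\cmm$ in each class modulo $6$ (the Apéry set of $6$ in $\cmm$, equal to $\{0,49,20,9,40,29\}$ indexed by $r = 0,1,2,3,4,5$), a fact that can be read off Table~\ref{tb:mcnuggetexpansions}. Since $\cmm$ is closed under adding $6$, every element of $\cmm$ lies a nonnegative multiple of $6$ above the corresponding minimum, so $a \geq 0$ holds throughout $\cmm$. The recurrence $L(x + 6) = L(x) + 1$ then drops out of the closed formula, since passing from $x$ to $x + 6$ preserves $r$ and increases $q$ by one.

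The step I expect to need the most care is the Apéry set computation: a priori one might worry there are elements of $\cmm$ in some residue class $r$ strictly smaller than the stated minimum, in which case our candidate $(a,b,c)$ would fail to be a factorization for small $x$. Verifying the six minima $\{0, 49, 20, 9, 40, 29\}$ directly from Table~\ref{tb:mcnuggetexpansions} (or from the Frobenius bound $F(\cmm) = 43$ together with the tabulated small values) resolves this, after which the proof reduces to routine substitution.
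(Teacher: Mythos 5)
Your proof is correct, and it takes a genuinely different route from the paper's after the common first step. Both arguments begin with the same two trades ($2\cdot 9 = 3\cdot 6$ and $3\cdot 20 = 10\cdot 6$) to conclude that any maximum-length factorization has $b \le 1$ and $c \le 2$. The paper then proceeds inductively: it observes from Table~1 that such a factorization has $a > 0$ unless $x \in \{0,9,20,29,40,49\}$, proves the recurrence $L(x+6) = L(x)+1$ by a lifting argument (a maximum-length factorization of $x+6$ must have positive first coordinate, and stripping one copy of $6$ gives a maximum-length factorization of $x$), and only then reads off the closed formula from the six base values. You instead go directly for the closed formula: the six pairs $(b,c) \in \{0,1\}\times\{0,1,2\}$ give values $9b+20c \in \{0,20,40,9,29,49\}$ hitting each residue class modulo $6$ exactly once, so the maximum-length factorization is \emph{unique} and explicitly computable, with $a \ge 0$ guaranteed because these six values form the Ap\'ery set of $6$ in $\cmm$; the recurrence then falls out of the formula. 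Your route yields strictly more information (uniqueness of the longest factorization, and an explanation of why the paper's exceptional set $\{0,9,20,29,40,49\}$ is exactly the Ap\'ery set), at the cost of a residue computation; the paper's route isolates the recurrence as the main claim, which is the statement that generalizes to other numerical monoids (cf.\ Theorem~\ref{t:maxminlenquasi} and Question~\ref{q:nodissonance}). All of your case computations and the Ap\'ery-set verification against Table~1 check out.
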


\begin{proof}
Fix $x \in \cmm$ and a factorization $(a, b, c)$ of $x$.  
% Supposing $x \notin \{0, 9, 20, 29, 40, 49\}$ so that $x-6 \in \cmm$, we will prove the equivalent statement $L(x) - 1 = L(x - 6)$.  We begin by proving $a > 0$.  
If $b > 1$, then $x$ has another factorization $(a + 3, b - 2, c)$ with length $a + b + c + 1$.  Similarly, if $c \ge 3$, then $(a + 10, b, c - 3)$ is also a factorization of $x$ and has length $a + b + c + 7$.  This implies that if $(a, b, c)$ has maximum length among factorizations of $x$, then $b \le 1$ and $c \le 2$.  Upon inspecting Table~\ref{tb:mcnuggetexpansions}, we see that unless $x \in \{0, 9, 20, 29, 40, 49\}$, we must have $a > 0$.  

Now, assume $(a,b,c)$ has maximum length among factorizations of $x$.  We claim $(a+1,b,c)$ is a factorization of $x + 6$ with maximum length.  From Table~\ref{tb:mcnuggetexpansions}, we see that since $x \in \cmm$, we must have $x + 6 \notin \{0, 9, 20, 29, 40, 49\}$, meaning any maximum length factorization of $x+6$ must have the form $(a' + 1, b', c')$.  This yields a factorization $(a', b', c')$ of $x$, and since $(a,b,c)$ has maximum length, we have $a + b + c \ge a' + b' + c'$.  As such, $(a+1,b,c)$ is at least as long as $(a' + 1, b', c')$, and the claim is proved.  Thus, 
\[
L(x+6) = a + 1 + b + c = L(x) + 1.
\]

From here, the given formula for $L(x)$ now follows from the first claim and the values $L(0)$, $L(9)$, $L(20)$, $L(29)$, $L(40)$, and $L(49)$ in Table~\ref{tb:mcnuggetlengthsets}.  
\end{proof}

A similar expression can be obtained for $\ell(x)$, ableit with 20 cases instead of 6, this time based on the value of $x$ modulo 20.  We encourage the reader to adapt the argument above for Theorem~\ref{t:mcnuggetmin}.  

\begin{thm}\label{t:mcnuggetmin}
For each $x \in \cmm$, $\ell(x + 20) = \ell(x) + 1$.  In particular, if we write $x = 20q + r$ for $q, r \in \mathbb N$ and $r < 20$, then
\[
\ell(x) = \left\{\begin{array}{l@{\qquad}l}
q & \textnormal{if } r = 0, \\
q + 1 & \textnormal{if } r = 6, 9, \\
q + 2 & \textnormal{if } r = 1, 4, 7, 12, 15, 18, \\
q + 3 & \textnormal{if } r = 2, 5, 10, 13, 16, \\
q + 4 & \textnormal{if } r = 8, 11, 14, 19, \\
q + 5 & \textnormal{if } r = 3, 17, \\
\end{array}\right.
\]
for each $x \in \cmm$.  
\end{thm}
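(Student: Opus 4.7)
The plan is to mirror the proof of Theorem~\ref{t:mcnuggetmax}, with the roles of $6$ and $20$ essentially swapped. Fix a factorization $(a,b,c) \in \mathsf Z(x)$. The trade $3 \cdot 6 \leftrightarrow 2 \cdot 9$ gives the length-decreasing move $(a,b,c) \to (a-3, b+2, c)$ whenever $a \geq 3$, so any minimum length factorization satisfies $a \leq 2$. This bound alone is not enough, since a factorization with large $b$ might still fail to be minimum length; to also bound $b$, we use two composite trades. The identity $6 + 6 \cdot 9 = 3 \cdot 20$ yields the move $(a,b,c) \to (a-1, b-6, c+3)$, which shortens by $4$ whenever $a \geq 1$ and $b \geq 6$, and the identity $2 \cdot 6 + 3 \cdot 20 = 8 \cdot 9$ yields the move $(0, b, c) \to (2, b-8, c+3)$, which shortens by $3$ whenever $a = 0$ and $b \geq 8$. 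Combining these, any minimum length factorization satisfies $a \leq 2$, together with $b \leq 5$ when $a \in \{1,2\}$ and $b \leq 7$ when $a = 0$.

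These structural bounds imply $6a + 9b \leq 63$ in any minimum length factorization, so $20c = x - 6a - 9b \geq x - 63$. Since $c$ is a nonnegative integer, this forces $c \geq 1$ whenever $x \geq 64$. For $x \in \cmm$ with $x \geq 44$, we then have $x + 20 \geq 64$, so any minimum length factorization $(a', b', c')$ of $x + 20$ has $c' \geq 1$; then $(a', b', c' - 1) \in \mathsf Z(x)$ is a factorization of $x$ of length $\ell(x+20) - 1$, giving $\ell(x) \leq \ell(x+20) - 1$. Combined with the trivial $\ell(x+20) \leq \ell(x) + 1$ (append a copy of $20$ to a minimum length factorization of $x$), this yields the recursion $\ell(x + 20) = \ell(x) + 1$ for every $x \in \cmm$ with $x \geq 44$. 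For the finitely many $x \in \cmm$ with $x < 44$, the same equality can be verified directly from Table~\ref{tb:mcnuggetlengthsets} together with a short computation for the $\ell$-values in the range $50 < x + 20 < 64$. Iterating the recursion $\ell(x+20) = \ell(x)+1$ from the base values $\ell(m_r)$ at the smallest McNugget number $m_r$ in each residue class modulo $20$ then yields the claimed piecewise closed-form expression for $\ell(x)$.

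The main obstacle is the first step: unlike in Theorem~\ref{t:mcnuggetmax}, where the required bounds $b \leq 1$ and $c \leq 2$ fall out of the two atomic trades $3 \cdot 6 \leftrightarrow 2 \cdot 9$ and $10 \cdot 6 \leftrightarrow 3 \cdot 20$, here the shortening direction requires combining several atomic trades, reflecting the fact that exchanging many small generators for a few larger ones is a multi-step process. Verifying that the bounds $b \leq 5$ (for $a \geq 1$) and $b \leq 7$ (for $a = 0$) are sharp is most easily done by parametrizing the integer solutions of $6\alpha + 9\beta + 20\gamma = 0$ as $(3s + 10t, -2s, -3t)$, noting that any length-reducing move must satisfy $s + 7t < 0$, and then checking case-by-case that no such $(s, t)$ with $t \leq -1$ produces a valid rearrangement once the claimed bounds on $a$ and $b$ are in force.
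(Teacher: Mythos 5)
The paper never writes out a proof of this theorem; it is explicitly left to the reader as an adaptation of the proof of Theorem~\ref{t:mcnuggetmax}, and your argument is precisely that adaptation, carried out correctly. Your key observation --- that the length-\emph{decreasing} direction cannot be run with single atomic trades but needs the composite identities $6 + 6\cdot 9 = 3\cdot 20$ and $2\cdot 6 + 3\cdot 20 = 8\cdot 9$ --- is the real content, and the resulting bounds ($a \le 2$; $b \le 5$ when $a \ge 1$; $b \le 7$ when $a = 0$) do force $6a + 9b \le 63$, hence $c \ge 1$, in every minimum-length factorization of any $x \ge 64$. The recursion $\ell(x+20) = \ell(x) + 1$ for $x \ge 44$ then follows as you say, and the finite check for smaller $x$ is routine. (Your closing paragraph on sharpness is not needed: the proof only uses the implication that a factorization violating the bounds is not of minimum length, which your three explicit moves already establish.)

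There is one substantive point you should not gloss over: iterating the recursion from the smallest element of $\cmm$ in each residue class modulo $20$ does \emph{not} reproduce the displayed formula --- it exposes an off-by-one misprint in the case $r = 3$. The smallest McNugget number congruent to $3$ modulo $20$ is $63$ (since $3, 23, 43 \notin \cmm$), and $\ell(63) = 7$ via $63 = 7 \cdot 9$; indeed no factorization of $63$ uses a $20$ at all, because $43 \notin \cmm$. Writing $63 = 20 \cdot 3 + 3$ gives $q = 3$, so the stated formula predicts $\ell(63) = q + 5 = 8$. The correct table places $r = 3$ in the $q + 4$ case alongside $8, 11, 14, 19$, leaving only $r = 17$ in the $q+5$ case; that corrected formula is what your method actually proves. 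So the one step of your writeup that fails is the final sentence asserting that the iteration ``yields the claimed piecewise closed-form expression'' --- it yields a formula disagreeing with the claim at $r = 3$, and the discrepancy lies in the claim, not in your argument.
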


Theorems~\ref{t:mcnuggetmax} and~\ref{t:mcnuggetmin} together yield a closed form for $\rho(x)$ that holds for all $x \in \cmm$.  Since $\text{lcm}(6,20) = 60$ cases are required, we leave the construction of this closed
form to the interested reader.

\subsection{Calculating delta sets}
\label{sec:deltasetcalculations}

Unlike maximum and minimum factorization length, $\Delta(x)$ is periodic for sufficiently large $x \in \cmm$.  For example, a computer algebra system can be used to check that $\Delta(91) = \{1\}$ while $\Delta(211) = \{1,2\}$.  Theorem~\ref{t:deltaperiodic} guarantees $\Delta(x + 120) = \Delta(x)$ for $x > 21600$, but some considerable reductions can be made.  In~particular, we will reduce the period from 120 down to 20, and will show that equality holds for all $x \ge 92$ (that is to say, 91~is the largest value of $x$ for which $\Delta(x + 20) \ne \Delta(x)$).  

\begin{thm}\label{t:mcnuggetdelta}
Each $x \in \cmm$ with $x \ge 92$ has $\Delta(x + 20) = \Delta(x)$.  Moreover,
% for every $x \in \cmm$, $\Delta(x) = \{1, a\}$, where $1 \le a \le 4$ and if $x \ge 92$, then
\[
\Delta(x) = \left\{\begin{array}{l@{\qquad}l}
\{1\} & \textnormal{if } r = 3, 8, 14, 17, \\
\{1, 2\} & \textnormal{if } r = 2, 5, 10, 11, 16, 19, \\
\{1, 3\} & \textnormal{if } r = 1, 4, 7, 12, 13, 18, \\
\{1, 4\} & \textnormal{if } r = 0, 6, 9, 15, \\
\end{array}\right.
\]
where $x = 20q + r$ for $q, r \in \mathbb N$ and $r < 20$.   Hence $\Delta(\cmm)=\{1,2,3,4\}$.
\end{thm}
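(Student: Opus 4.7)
The plan is to analyze $\mathcal L(x)$ by partitioning factorizations according to the number $c$ of 20-packs used. For each $c \ge 0$ with $x - 20c \in \{0\} \cup \langle 6, 9 \rangle$, the factorizations of $x$ with exactly $c$ copies of $20$ correspond to factorizations of $x - 20c$ in $\langle 6, 9 \rangle$, and their lengths form the contiguous integer interval
\[
I_c^x = \bigl[\, c + \lceil m/3 \rceil,\ c + \lfloor m/2 \rfloor \,\bigr], \qquad m = (x - 20c)/3.
\]
Hence $\mathcal L(x) = \bigcup_c I_c^x$, where $c$ runs over the arithmetic progression $\{c_0, c_0+3, c_0+6, \ldots\}$ with $c_0 \in \{0,1,2\}$ chosen so that $c_0 \equiv 2x \pmod 3$.

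To establish $\Delta(x+20) = \Delta(x)$ for $x \ge 92$, I would exploit the length-increasing injection $(a,b,c) \mapsto (a,b,c+1)$ from $\mathsf Z(x)$ into $\mathsf Z(x+20)$, whose image is exactly the set of factorizations of $x+20$ with $c \ge 1$. This yields
\[
\mathcal L(x+20) \;=\; (\mathcal L(x) + 1) \,\cup\, \mathcal L_0(x+20),
\]
where $\mathcal L_0(x+20)$ is the (possibly empty) length set of factorizations of $x+20$ using only $6$s and $9$s; this set is empty unless $x \equiv 1 \pmod 3$, in which case it equals the contiguous interval $I_0^{x+20} = [\lceil (x+20)/9 \rceil, \lfloor (x+20)/6 \rfloor]$. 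When $x \not\equiv 1 \pmod 3$, then $\mathcal L(x+20) = \mathcal L(x) + 1$ and $\Delta(x+20) = \Delta(x)$ immediately. When $x \equiv 1 \pmod 3$ (so $c_0 = 2$), I would verify the two floor-ceiling inequalities
\[
3 + \lceil (x-40)/9 \rceil \;\le\; \lceil (x+20)/9 \rceil \;\le\; \lfloor (x-40)/6 \rfloor + 4,
\]
which say that $I_0^{x+20}$ attaches to the top of the highest interval $I_{c_0}^x + 1$ of $\mathcal L(x) + 1$ without either reaching down to fill an existing gap below or creating a new gap above. The left inequality follows from $(x+20) - (x-40) = 60$; the right is verified by direct arithmetic and holds for every $x \equiv 1 \pmod 3$ with $x \ge 94$. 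The bound is sharp at $x = 91$: one computes $\mathcal L(91) = \{8,9,10\}$ but $\mathcal L(111) = \{9,10,11\} \cup \{13,\ldots,18\}$, introducing a new gap at $12$ and forcing $2 \in \Delta(111)$.

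To verify the closed form for $\Delta(x)$, I would then compute $\mathcal L(x)$ for one representative in each residue class modulo $20$ with $92 \le x \le 111$, directly from $\mathcal L(x) = \bigcup_c I_c^x$. Analysis of the gap function $g(m) = 6 + \lceil m/3 \rceil - \lfloor m/2 \rfloor$ (the number of missing integers between consecutive intervals $I_{c+3}^x$ and $I_c^x$) shows that for $x \ge 92$, $\mathcal L(x)$ consists of a single long top interval together with at most one isolated bottom interval, and the size of the intervening gap (one of $0$, $1$, $2$, or $3$) depends only on $x \bmod 20$; this yields the four cases $\Delta(x) \in \{\{1\},\{1,2\},\{1,3\},\{1,4\}\}$ in the statement. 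Finally, $\Delta(\cmm) = \{1,2,3,4\}$ follows: each of $1,2,3,4$ appears in some $\Delta(x)$; no larger value appears for $x \ge 92$; and a finite check (extending Table~\ref{tb:mcnuggetlengthsets}) shows $\Delta(x) \subseteq \{1\}$ for $x < 92$. The principal technical obstacle is the floor-ceiling bookkeeping in the $x \equiv 1 \pmod 3$ case of the previous paragraph, which precisely pins down why periodicity first kicks in at $x = 92$.
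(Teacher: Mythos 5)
Your proposal is sound and, modulo the routine floor--ceiling verifications you flag, it proves the theorem; but it is organized quite differently from the paper's argument, so a comparison is worthwhile. The paper works at the \emph{bottom} of the length set: it shows (for $x > 103$) that every gap in $\mathcal L(x)$ lies in the window $[\ell(x), \ell(x)+4]$ and that the factorizations realizing lengths there all use at least one copy of $20$, so the map $(a,b,c)\mapsto(a,b,c+1)$ transports the gap structure from $x$ to $x+20$; the explicit case formula for $\Delta(x)$ and the range $92 \le x \le 103$ are then delegated to software. You instead work at the \emph{top}: you decompose $\mathcal L(x)$ into the explicit contiguous intervals $I_c^x$ indexed by the number of $20$'s used, observe that passing from $x$ to $x+20$ contributes exactly one new interval $I_0^{x+20}$ (and only when $x \equiv 1 \pmod 3$), and check that this interval glues onto the top of the shifted length set without creating a new gap or filling an old one. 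Your route buys three things the paper's does not: the sharp threshold $x \ge 92$ falls out of the single inequality $\lceil (x+20)/9\rceil \le \lfloor (x-40)/6\rfloor + 4$ (which I checked does first hold at $x = 94$ and fails at $x = 91$, consistent with your computations of $\mathcal L(91)$ and $\mathcal L(111)$); the case formula for $\Delta(x)$ and the value $\Delta(\cmm)=\{1,2,3,4\}$ can be derived by hand from the gap function $g(m) = 6 + \lceil m/3\rceil - \lfloor m/2\rfloor$ rather than by machine; and the interval decomposition makes the paper's claims (i) and (ii) transparent, since one sees directly that at most one gap can occur and that it sits just above the bottom interval. One correction: your final finite check should conclude $\Delta(x) \subseteq \{1,2,3,4\}$ for $x < 92$, not $\Delta(x)\subseteq\{1\}$ --- for instance $\Delta(60)=\{1,4\}$, as computed in Section~\ref{sec:lengthset} --- but this weaker (and true) containment is all that the computation of $\Delta(\cmm)$ requires.
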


\begin{proof}
We will show that $\Delta(x + 20) = \Delta(x)$ for each $x > 103$.  The remaining claims can be verified by extending Table 2 using computer software.  

Suppose $x > 103$, fix a factorization $(a,b,c)$ for $x$, and let $l = a + b + c$.  If $c \ge 3$, then $x$ also has factorizations $(a + 10, b, c - 3)$, $(a + 7, b + 2, c - 3)$, $(a + 4, b + 4, c - 3)$, and $(a + 1, b + 6, c - 3)$, meaning 
\[
\{l, l + 4, l + 5, l + 6, l + 7\} \subset \mathcal L(x).
\]
Alternatively, since $x > 103$, if $c \le 2$, then $6a + 9b \ge 63$, and thus
\[
l \ge a + b + 2 \ge 9 \ge \ell(x) + 4.
\]
The above arguments imply (i) any gap in successive lengths in $\mathcal L(x)$ occurs between $\ell(x)$ and $\ell(x) + 4$, and (ii) every factorization with length in that interval has at least one copy of 20.  As such, $x + 20$ has the same gaps between $\ell(x + 20)$ and $\ell(x + 20) + 4$ as $x$ does between $\ell(x)$ and $\ell(x) + 4$, which proves $\Delta(x + 20) = \Delta(x)$ for all $x > 103$.  
\end{proof}

With a slightly more refined argument than the one given above, one can prove without the use of software that $\Delta(x + 20) = \Delta(x)$ for all $x \ge 92$.  We~encourage the interested reader to work out such an argument.  

\subsection{Calculating \texorpdfstring{$\omega$}{omega}-primality}
\label{sec:omegacalculations}

We conclude our study of $\cmm$ with an expression for the $\omega$-primality of $x \in \cmm$ and show (in some sense) how far a McNugget number is from being prime.  We proceed in a similar fashion to Theorems~\ref{t:mcnuggetmax} and~\ref{t:mcnuggetmin}, showing that with only two exceptions, $\omega(x + n_1) = \omega(x) + 1$ for all $x \in \cmm$.  

\begin{thm}\label{t:mcnuggetomega}
With the exception of $x = 6$ and $x = 12$, every nonzero $x \in \cmm$ satisfies $\omega(x + 6) = \omega(x) + 1$.  In particular, we have
\[
\omega(x) = \left\{\begin{array}{l@{\qquad}l}
q & \textnormal{if } r = 0, \\
q + 5 & \textnormal{if } r = 1, \\
q + 7 & \textnormal{if } r = 2, \\
q + 2 & \textnormal{if } r = 3, \\
q + 4 & \textnormal{if } r = 4, \\
q + 9 & \textnormal{if } r = 5, \\
\end{array}\right.
\]
for each $x \ne 6, 12$, where $x = 6q + r$ for $q, r \in \mathbb N$ and $r < 6$.  
\end{thm}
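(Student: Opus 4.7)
The plan is to establish the closed form $\omega(x) = (x + \alpha_r)/6$ for every $x \in \cmm$ with $x \ne 6, 12$, where $x = 6q + r$ with $0 \le r < 6$ and $\alpha_r$ denotes the unique element of the Ap\'ery set $\mathrm{Ap}(\cmm, 6) = \{s \in \cmm : s - 6 \notin \cmm\} = \{0, 9, 20, 29, 40, 49\}$ congruent to $-x$ modulo $6$. Since $\alpha_r$ depends only on $x \bmod 6$, both sides of this formula grow by exactly $1$ when $x$ is replaced by $x + 6$, so the recurrence $\omega(x+6) = \omega(x) + 1$ is an immediate corollary, and matching each residue to the corresponding Ap\'ery element recovers the six cases listed in the statement.

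For the lower bound I would invoke Theorem~\ref{t:omegaequiv}(b) directly: setting $k = (x + \alpha_r)/6$, the sum consisting of $k$ copies of $6$ is a bullet for $x$, since its total $6k = x + \alpha_r$ differs from $x$ by $\alpha_r \in \cmm$, while removing one $6$ leaves $x + (\alpha_r - 6)$ with $\alpha_r - 6 \notin \cmm$ by the defining property of the Ap\'ery set. This certifies $\omega(x) \ge k$ for every $x \in \cmm$.

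For the upper bound I fix an arbitrary bullet $(a, b, c)$ for $x$ (using the tuple notation of Section~\ref{sec:setoffactorizations}) and put $y = 6a + 9b + 20c - x \in \cmm$. The bullet condition of Theorem~\ref{t:omegaequiv}(b) forces $y \in \mathrm{Ap}(\cmm, g)$ for each generator $g \in \{6, 9, 20\}$ appearing with positive multiplicity in the tuple, and the length obeys $a + b + c \le (6a + 9b + 20c)/6 = (x + y)/6$, with strict inequality whenever $b$ or $c$ is positive. I would then tabulate the other two Ap\'ery sets $\mathrm{Ap}(\cmm, 9) = \{0, 6, 12, 20, 26, 32, 40, 46, 52\}$ and $\mathrm{Ap}(\cmm, 20)$, along with their pairwise intersections with $\mathrm{Ap}(\cmm, 6)$, and split into cases according to which of $a, b, c$ are positive. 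The main obstacle is a residue-by-residue verification that in each case the resulting bound on $a + b + c$ does not exceed $(x + \alpha_r)/6$; the only genuine obstruction turns out to be the pure-$20$ bullet $(0, 0, 3)$, whose length $3$ strictly exceeds the pure-$6$ values $1$ and $2$ at $x = 6$ and $x = 12$ respectively.

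The two exceptional values are then disposed of directly by enumerating the finitely many candidate bullets for $x = 6$ and $x = 12$ (bounded in length by, say, $4$) to confirm $\omega(6) = \omega(12) = 3$; combined with $\omega(18) = 3$ from the formula, the recurrence $\omega(x + 6) = \omega(x) + 1$ fails precisely at $x = 6$ (where $\omega(12) = 3 = \omega(6)$) and at $x = 12$ (where $\omega(18) = 3 = \omega(12)$), and holds for every other nonzero $x \in \cmm$, yielding the statement.
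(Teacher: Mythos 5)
Your proposal is correct, but it proves the theorem by a genuinely different route than the paper. The paper's engine is the recurrence itself: it shows that every $x > 12$ has a \emph{maximum-length} bullet with positive first coordinate (by explicitly trading any bullet of the form $(0,b,c)$ for one at least as long with $a>0$, in three cases according to which of $b,c$ vanish), then transfers maximum-length bullets between $x$ and $x+6$ by adding or deleting one copy of $6$, and finally anchors the six residue classes with unexplained base-case computations $\omega(9)=3$, $\omega(20)=10$, $\omega(15)=4$, $\omega(29)=13$, $\omega(40)=10$, $\omega(49)=13$. You instead go straight for the closed form $\omega(x) = (x+\alpha_r)/6$ and obtain the recurrence as a corollary; the real payoff is conceptual, since your argument \emph{explains} the six constants as the Ap\'ery set of $6$ in $\cmm$ rather than leaving them as outputs of a computation, and your lower bound (the pure-$6$ bullet of length $(x+\alpha_r)/6$, certified by $\alpha_r \in \cmm$ and $\alpha_r - 6 \notin \cmm$) is cleaner than anything in the paper. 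Your identification of the key constraint is also right: condition (b)(ii) of Theorem~\ref{t:omegaequiv} forces the excess $y = 6a+9b+20c-x$ of any bullet into the Ap\'ery set of each generator actually used, and the length bound $a+b+c \le (x+y)/6$ loses $b/2 + 7c/3$ whenever $b$ or $c$ is positive, which is what kills all but the pure-$6$ bullets for large $x$. The one place to be honest about the remaining labor is the case $a=0$: there the excess is only bounded by $\max \mathrm{Ap}(\cmm,20) = 63$, so the crude estimate $b+c \le (x+63)/9$ only beats $(x+\alpha_r)/6$ once $x$ is over a hundred or so, and your ``residue-by-residue verification'' genuinely has to sweep a nontrivial finite range before concluding that $(0,0,3)$ at $x=6,12$ is the only obstruction. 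That deferred finite check is comparable in weight to the base-case computations the paper itself leaves to the reader, so I do not regard it as a gap; your accounting of the two exceptions ($\omega(6)=\omega(12)=\omega(18)=3$, so the recurrence fails exactly at $x=6$ and $x=12$) matches the paper's statement.
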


\begin{proof}
Fix $x \in \cmm$.  Following the spirit of the proof of Theorem~\ref{t:mcnuggetmax}, we begin by proving each $x > 12$ has a maximum length bullet $(a,b,c)$ with $a > 0$.  Indeed, suppose $(0,b,c)$ is a bullet for $x$ for some $b, c \ge 0$.  The element $x \in \cmm$ also has some bullet of the form $(a',0,0)$, where $a'$ the smallest integer such that $6a' - x \in \cmm$.  Notice $a' \ge 3$ since $x > 12$.  We consider several cases.  

\begin{itemize}
\item 
If $c = 0$, then $9b - x \in \cmm$ but $9b - x - 9 \notin \cmm$.  If $b \le 3$, then $a' \ge b$.  Otherwise, either $9(b-1)$ or $9(b-2)$ is a multiple of 6, and since $9(b-2) - x \notin \cmm$ as well, we see $a' \ge \frac{3}{2}(b-2) + 1 \ge b$.

\item 
If $b = 0$, then there are two possibilities.  If $c \le 3$, then $a' \ge c$.  Otherwise, either $20(c-1)$, $20(c-2)$ or $20(c-3)$ is a multiple of 6, so we conclude $a' \ge \frac{10}{3}(c - 3) + 1 \ge c$.  

\item 
If $b, c > 0$, then $9b + 20c - x - 9, 9b + 20c - x - 20 \notin \cmm$, so $9b + 20c - x$ is either $0$, $6$, or $12$.  This means either $(3,b-1,c)$, $(2,b-1,c)$, or $(1,b-1,c)$ is also a bullet for $x$, respectively.  

\end{itemize}
In each case, we have constructed a bullet for $x$ at least as long as $(0,b,c)$, but with positive first coordinate, so we conclude $x$ has a maximal bullet with nonzero first coordinate.  

Now, using a similar argument to that given in the proof of Theorem~\ref{t:mcnuggetmax}, if $(a + 1, b, c)$ is a maximum length bullet for $x + 6$, then $(a, b, c)$ is a maximum length bullet for $x$.  This implies $\omega(x + 6) = \omega(x) + 1$ whenever $x + 6$ has a maximum length bullet with positive first coordinate, which by the above argument holds whenever $x > 12$.  This proves the first claim.  

The formula for $\omega(x)$ now follows from the first claim, the computations $\omega(9) = 3$ and $\omega(20) = 10$ from Section~\ref{sec:beyondlengthset}, and analogous computations for $\omega(15) = 4$, $\omega(18) = 3$, $\omega(29) = 13$, $\omega(40) = 10$, and $\omega(49) = 13$.  
\end{proof}

Figure~\ref{fig:mcnuggetomega} plots $\omega$-values of elements of the McNugget monoid $\cmm$.  Since $\omega(x + 6) = \omega(x) + 1$ for large $x \in \cmm$, most of the plotted points occur on one of 6 lines with slope $\frac{1}{6}$.  It is also evident in the plot that $x = 6$ and $x = 12$ are the only exceptions.  

\begin{figure}[tbp]
\begin{center}
\includegraphics[width=5in]{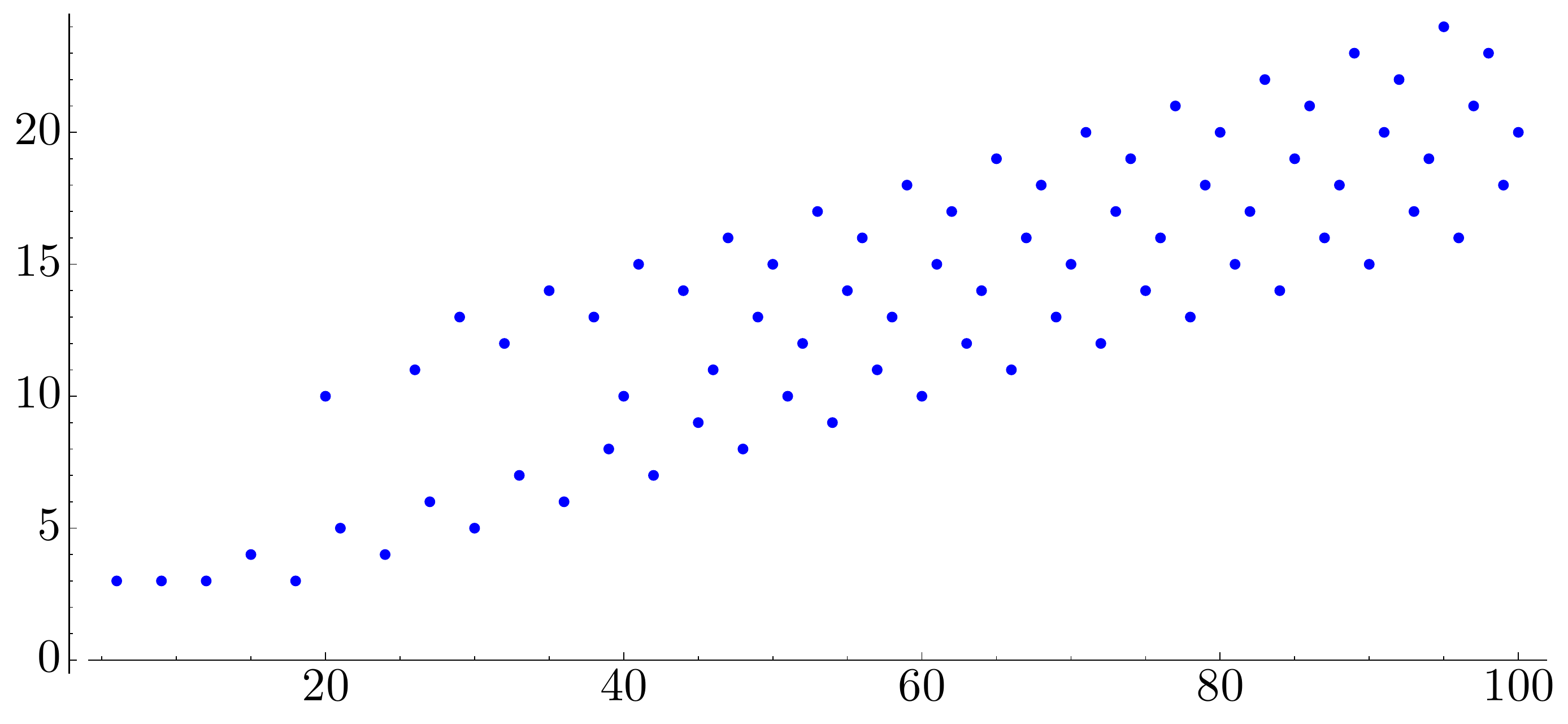}
\end{center}
\caption[A plot of $\omega$-primality in the McNugget monoid]{A plot depicting the $\omega$-primality function $\omega(n)$ for $n \in \cmm$.}
\label{fig:mcnuggetomega}
\end{figure}

Although $\omega(x + 6) = \omega(x) + 1$ does not hold for every $x \in \cmm$, there are some numerical monoids for which the ``sufficiently large'' hypothesis in Theorem~\ref{t:omegaquasi} can be dropped (for instance, any numerical monoids with 2 minimal generators has this property).  Hence, we conclude with a problem suitable for attack by undergraduates.

\begin{question}\label{q:nodissonance}
Determine which numerical monoids $\nm$ satisfy each of the following conditions for all $x$ (i.e., not just sufficiently large~$x$):
\begin{enumerate}
\item $L(x + n_1) = L(x) + 1$, 
\item $\ell(x + n_k) = \ell(x) + 1$, or 
\item $\omega(x + n_1) = \omega(x) + 1$. 
\end{enumerate}
\end{question}

\section{Appendix:\ computer software for numerical monoids}
\label{sec:computers}

Many of the computations referenced in this paper can be performed using the \texttt{numericalsgps} package \cite{DGM} for the computer algebra system \texttt{GAP}.  The~brief snippet of sample code below demonstrates how the package is used to compute various quantities discussed in this paper.  

\begin{verbatim}
gap> LoadPackage("num");
true
gap> McN:=NumericalSemigroup(6,9,20);
<Numerical semigroup with 3 generators>
gap> FrobeniusNumberOfNumericalSemigroup(McN);
43
gap> 43 in McN; 
false
gap> 44 in McN;
true
gap> FactorizationsElementWRTNumericalSemigroup(18,McN);
[ [ 3, 0, 0 ], [ 0, 2, 0 ] ]
gap> OmegaPrimalityOfElementInNumericalSemigroup(6,McN);
3
\end{verbatim}

This only scratches the surface of the extensive functionality offered by the \texttt{numericalsgps} package.  We encourage the interested reader to install and experiment with the package; instructions can be found on the official webpage, whose URL is included below.  

\begin{center}
\url{https://www.gap-system.org/Packages/numericalsgps.html}
\end{center}

%%%%%%%%%%%%%%%%%%%%%%%%%%%%%%%%%%%%%%%%%

\end{document}